\documentclass[10pt, reqno]{amsart}

\usepackage{pgfplots}
\pgfplotsset{compat=1.18}
\usepackage{hyperref}
\usepackage{amssymb, amsmath, amsthm, amsfonts,mathrsfs}
\usepackage{verbatim}
\usepackage{bbm}
\usepackage{enumerate}
\usepackage{dsfont}
\usepackage{upgreek}
\usepackage[mathscr]{eucal}
\usepackage{tikz}
\usetikzlibrary{arrows.meta, positioning}
\usepackage[normalem]{ulem}
\usepackage{epsfig}
\usepackage{pst-grad} 
\usepackage{pst-plot} 
\usepackage{geometry}
\usepackage[backgroundcolor=black!5,linecolor=black]{todonotes}
\usepackage{comment}
\usepackage{booktabs} 
\usepackage{tabularx}
\usepackage{caption} 
\usepackage{tablefootnote}
\usepackage{multirow}
\usepackage{enumerate}
\usepackage{float}

\newtheorem{theorem}{Theorem}[section]

\newtheorem{proposition}[theorem]{Proposition}
\newtheorem{corollary}[theorem]{Corollary}
\newtheorem{lemma}[theorem]{Lemma}

\numberwithin{equation}{section}
\newtheorem{remark}{Remark}

\begin{document}
	\title{Dispersive estimates for the Landau Hamiltonian on the hyperbolic plane}

	\author[H. Guo]{Huanqing Guo}
	\address{Huanqing Guo
		\newline \indent  Institute of Applied Physics and Computational Mathematics, Beijing, 100088, China}
	\email{guohuanqing23@gscaep.ac.cn}
	\author{Haoran Wang}
	\address{Institute of Mathematics, Henan Academy of Sciences, Zhengzhou 450046, China}
	\email{wanghaoran@hnas.ac.cn}
	
\author[Junyong Zhang]{Junyong Zhang}
\address{Junyong Zhang\newline
	Department of Mathematics, Beijing Institute of Technology,
	Beijing 100081, P.\,R.\ China}
\email{zhang\_junyong@bit.edu.cn}

	\author[J. Zheng]{Jiqiang Zheng}
	\address{Jiqiang Zheng
		\newline Institute of Applied Physics and Computational Mathematics, Beijing, 100088, China.
		\newline National Key Laboratory of Computational Physics, Beijing, 100088, China}
	\email{zheng\_jiqiang@iapcm.ac.cn, zhengjiqiang@gmail.com}	
	
	\maketitle
	\begin{abstract}
		In this paper, We obtain dispersive estimates for solutions to the Schrödinger equation with a uniform magnetic field on the hyperbolic plane \(\mathbb{H}\). The key ingredient is to obtain an explicit representation formula for the kernel of the corresponding  Schr\"{o}dinger propagator. As a consequence, we prove the corresponding Strichartz estimates for all admissible pairs on \(\mathbb{H}\). 
	\end{abstract}
		\begin{center}
		\begin{minipage}{100mm}
			{ \small {{\bf Key Words:}  Dispersive estimate; Strichartz estimate; Hyperbolic Landau Hamiltonian;   Schr\"odinger equation. }
				{}
			}\\
			{ \small {\bf AMS Classification:}
				{ 35Q55, 35P10, 42B37.}
			}
		\end{minipage}
	\end{center}
	
	\section{Introduction}
	The Landau Hamiltonian arises naturally in the two-dimensional quantum Hall effect (see e.g. \cite{CHMM98,Xia88}) and plays a crucial role in Polyakov's string theory \cite{Pol81}. In the symmetric gauge, the classical Landau Hamiltonian $\mathcal{L}_B$ on the Euclidean plane \(\mathbb{R}^2\) can be written as \cite{Lan30}
	\begin{equation}\label{Lan:euclid}
		\mathcal{L}_B:=(i\nabla+A_L(x))^2,\quad \nabla=\left(\frac{\partial}{\partial x_1},\frac{\partial}{\partial x_2}\right),\quad A_L(x)=\frac{B}{2}(-x_2,x_1),\quad x=(x_1,x_2)\in\mathbb{R}^2,
	\end{equation}
	which typically models the dynamics of a single electron moving in a two-dimensional conductor under the influence of a perpendicular uniform magnetic field of strength $B$. It is well-known that the spectrum of the Hamiltonian $\mathcal{L}_B$ consists of purely discrete eigenvalues with infinite multiplicity
	\begin{equation*}
		\lambda_n(B):=(2n+1)B,\quad n\in\mathbb{N}:=\{0,1,2,\cdots\},
	\end{equation*}
	which is often referred to as the (Euclidean) Landau levels in the physical literature (see e.g. \cite{ESV02,Sto17}).

    In this paper, we consider the Landau Hamiltonian on the hyperbolic plane. The upper half-plane
	\[
	\mathbb{H} = \{z = x+iy \in \mathbb{C} : y > 0\}
	\]
	equipped with the Poincar\'{e} metric \(g:=y^{-2}(dx^2 + dy^2)\) forms a complete, simply connected, two-dimensional Riemannian manifold of constant negative curvature \(-1\).  
	The uniform magnetic field on \(\mathbb{H}\) is represented by a \(2\)-form \(F_B:= iBy^{-2}dx\wedge dy\), where \(B\in\mathbb{R}\) stands for the strength of the uniform magnetic field. Any \(1\)-form \(A\) satisfying \(idA = F_B\) is called a vector potential associated with \(F_B\). To be specific, we shall work with the following magnetic potential
	\begin{equation}
		\label{eq:gauge}
		A = B y^{-1}dx.
	\end{equation}
	Let \(\mathbf{L}_B\) be the trivial line bundle of Hermitian type \(\mathbb{H}\times \mathbb{C}\) with a fixed Hermitian structure $\langle \cdot,\cdot\rangle_z$ and the Hermitian covariant derivative $\nabla^A=d-iA$. The wave functions are just the sections of the line bundle $\mathbf{L}_B$ and the scalar product of $L^2(\mathbb{H}):=L^2(\mathbb{H},\mathbf{L}_B)$ can be written as
	\[(f_1,f_2):=\int_{\mathbb{H}} \langle f_1,f_2\rangle_zd\omega(z),\]
    where
    \[d\omega(z)=y^{-2}dx\wedge dy,\quad z=x+iy.\]
	The Landau Hamiltonian \(H_B\) on \(\mathbb{H}\) is defined as the connection Laplacian on $\mathbf{L}_B$
	\begin{equation}
		\label{eq:LandauHamiltonian}
		H_B = -y^2\Bigl(\partial_x - i\frac{B}{y}\Bigr)^2 - y^2\partial_y^2 .
	\end{equation}	
	According to \cite{Shubin2001}, \(H_B\) is a densely defined positive operator on \(L^2(\mathbb{H},d\omega)\); it is essentially self‑adjoint, and we still denote its unique self‑adjoint extension by \(H_B\). Indeed, if we define the Sobolev norms
\[||f||_{\dot{\mathcal{H}}^s_A}=||H^{s/2}_Bf||_{L^2},\quad ||f||_{\mathcal{H}_A^s}=||f||_{L^2}+||H^{s/2}_B f||_{L^2},\quad s\in\mathbb{R},\]
then $D(H_B)=\mathcal{H}^1_A$.

	In this paper, we are interested in the dispersive properties of solutions to the Cauchy problem of the Schrödinger equation with a constant magnetic field on the hyperbolic plane \(\mathbb{H}\),
	\begin{equation}
		\label{eq:prob}
		i\partial_t u = H_B u,\qquad u=u(t,z):\mathbb{H}\times\mathbb{R}\to\mathbb{C},
	\end{equation}
	which describes the non‑relativistic dynamics of a quantum particle in the fixed external magnetic field \(F_B = dA\). When \(B \neq 0\), the Euclidean analogue does not exhibit time decay due to the presence of Landau levels. In the hyperbolic plane, however, by using group‑theoretical methods, Comtet~\cite{Comtet1987} proved that the spectrum of \(H_B\) in \(L^2(\mathbb{H},d\omega)\) consists of a continuous part
	\(	\Big[\frac{1}{4}+B^2,+\infty\Big),	\)
	corresponding to the scattering states, and a finite number of eigenvalues (called the hyperbolic Landau levels) given by
	\[
	E_n = \frac{1}{4}+B^2-\bigl(n+\tfrac12-B\bigr)^2,\qquad 0\le n<|B|-\tfrac12,
	\]
	with infinite multiplicity. Note that the discrete spectrum vanishes when \(|B|<1/2\).  
    
    Now we state the main theorem concerning the representation formula for the solution of the Schr\"{o}dinger equation \eqref{eq:prob}.
	
	\begin{theorem}[Representation formula]
		\label{thm:Main}
		Let \(r\) denote the hyperbolic distance of two point \(z,z'\in\mathbb{H}\) and let \(H_B\) be the self-adjoint Hamiltonian \eqref{eq:LandauHamiltonian} with \(|B|<\tfrac12\), then the solution \(u(t,z)\) to the Schrödinger equation \eqref{eq:prob} with initial data \(u(0,z)=u_0(z)\) is given by
		\begin{equation}
			\label{eq:magnetic solution}
			u(t,z)=c|t|^{-\frac{3}{2}}e^{-it(B^2+\frac14)}\int_{\mathbb{H}}\kappa(z,z')\,u_0(z')
			\,d\omega(z'),
		\end{equation}
		where
		\[
		\kappa(z,z')=e^{-iB\vartheta}\int_{r}^{\infty}e^{i\frac{\rho^2}{4t}}\bigl[\Psi(\rho,B)+\Psi(\rho,-B)\bigr]\rho\,d\rho.
		\]
		Here \(\vartheta\) is defined via \(\tan\frac{\vartheta}{2}=\frac{\Re(z-z')}{\Im(z+z')}\), and \(\Psi(\rho,B)\) is given by
        \begin{equation}
		\label{eq:Psi}
		\Psi(\rho,B) = \frac{1}{\sqrt{2(\cosh\rho - \cosh r)}}\;
		\Biggl( \frac{\cosh(\rho/2) + \frac{1}{\sqrt{2}}\sqrt{\cosh\rho - \cosh r}}
		{\cosh(r/2)} \Biggr)^{\!-2B}.
	\end{equation}     
	\end{theorem}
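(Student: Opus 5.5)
The plan is to reduce the magnetic propagator to a function of the scalar hyperbolic distance by means of the gauge covariance of $H_B$, and then to read off its radial profile from a known closed form for the magnetic heat kernel, continued analytically in time.

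\emph{Step 1 (gauge covariance).} The operator $H_B$ commutes with the magnetic translations, that is, $PSL(2,\mathbb{R})$ acting together with a phase. Consequently every kernel $f(H_B)(z,z')$ has the form $e^{-iB\vartheta(z,z')}F(r)$, with the phase $\vartheta$ exactly as defined in Theorem~\ref{thm:Main}. So we only need the radial profile $F$ for $f(\lambda)=e^{-it\lambda}$.

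\emph{Step 2 (heat kernel).} We use the known Selberg/Comtet formula for the magnetic heat kernel when $|B|<\tfrac12$, where there is no discrete spectrum. It can be written as
\[
e^{-sH_B}(z,z')=c\, s^{-3/2}e^{-s(B^2+\frac14)}e^{-iB\vartheta}\int_r^\infty e^{-\frac{\rho^2}{4s}}\,\rho\,\frac{T_{2B}\bigl(\cosh(\rho/2)/\cosh(r/2)\bigr)}{\sqrt{\cosh\rho-\cosh r}}\,d\rho ,
\]
where $T_{2B}$ is a Chebyshev-type function. To get this representation we conjugate $H_B$ to the radial operator $-\partial_r^2-\coth r\,\partial_r+\frac{B^2}{\cosh^2(r/2)}$ acting on the phase-stripped kernel. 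We then check directly that the ansatz $s^{-3/2}e^{-s(B^2+1/4)}\int_r^\infty e^{-\rho^2/4s}\Phi(\rho,r)\rho\,d\rho$ solves the heat equation. This reduces to a hyperbolic PDE for $\Phi$, which is the McKean-type transmutation from the 3-dimensional hyperbolic heat kernel. Finally, with $w=\cosh(\rho/2)/\cosh(r/2)$, one verifies that
\[
T_{2B}(w)=\tfrac12\Bigl[\bigl(w+\sqrt{w^2-1}\bigr)^{-2B}+\bigl(w+\sqrt{w^2-1}\bigr)^{2B}\Bigr].
\]
Since $\sqrt{w^2-1}=\sqrt{\cosh\rho-\cosh r}/(\sqrt2\cosh(r/2))$, this is precisely $\Psi(\rho,B)+\Psi(\rho,-B)$ up to the prefactor $1/\sqrt{2(\cosh\rho-\cosh r)}$. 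The normalization constant $c$ is fixed by requiring the delta initial condition as $s\to0$; there the kernel must match the Euclidean one in the limit $r\to0$.

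\emph{Step 3 (continuation to Schrödinger).} We replace $s$ by $it$. Both sides are holomorphic in $\Re s>0$: the left side by the spectral theorem, since $H_B\ge0$, and the right side because the integral converges absolutely for $\Re s>0$. Indeed $\Psi=O(e^{(|B|-1/2)\rho})$, which is integrable against the Gaussian. We then let $\Re s\to0^+$ in the distributional sense, testing against $u_0\in C_c^\infty$. This yields \eqref{eq:magnetic solution}, with $e^{i\rho^2/4t}$ as the oscillatory integral. Note that $s^{-3/2}$ becomes $|t|^{-3/2}$ times a constant phase, which is absorbed into $c$.

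\emph{Main obstacle.} The key difficulty is Step 2: verifying that the profile $\Psi(\rho,B)+\Psi(\rho,-B)$ solves the transmuted PDE, with the correct boundary behaviour at $\rho=r$. There is an inverse-square-root singularity at $\rho=r$, which must give the correct Abel-type inversion. Our first attempt will be the Abel/Mehler transform: write the radial kernel as an Abel transform $\int_r^\infty g(\rho)\,\frac{\sinh\rho\,d\rho}{\sqrt{\cosh\rho-\cosh r}}$, and identify $g$ via the spectral (Fourier–Jacobi) decomposition of $H_B$ with hypergeometric eigenfunctions. A secondary issue is justifying convergence of the oscillatory $\rho$-integral at $t$ real. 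We would handle it by one integration by parts using $\partial_\rho e^{i\rho^2/4t}=\frac{i\rho}{2t}e^{i\rho^2/4t}$, which is valid since $\Psi$ decays exponentially when $|B|<\tfrac12$.
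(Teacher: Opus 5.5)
Your route is genuinely different from the paper's. The paper never passes through the heat kernel. It applies $e^{-itH_B}$ to Fay's expansion \eqref{eq:spectral decomposition}, so the kernel becomes an integral over $\Re s=\frac12$ of $e^{its(s-1)}P^0_{s,B}(r)$ against the density $\frac{(s-\frac12)\sin 2\pi s}{\sin\pi(s-B)\sin\pi(s+B)}$. It then inserts a Mehler-type Abel representation of $P^0_{s,B}$ (valid for $B<\Re s<1-B$), whose prefactor cancels this density and in which $s$ enters only through $\sinh((s-\frac12)\rho)$. The $\nu$-integral $\int e^{-it\nu^2}\nu\sin(\nu\rho)\,d\nu\propto t^{-3/2}\rho\,e^{i\rho^2/4t}$ then gives \eqref{exp:Sch} directly at real time. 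The phase is read off from Fay's eigenfunctions rather than from magnetic translation covariance.

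You instead take the heat kernel as known input and continue in $s$. This buys a clean passage to real time. For $|B|<\frac12$ the integral $\int_r^\infty\rho\,|\Psi(\rho,B)+\Psi(\rho,-B)|\,d\rho$ is finite uniformly in $r$, and $|e^{-\rho^2/4s}|\le 1$ on $\Re s\ge 0$. So dominated convergence gives the boundary values at once, and the integration by parts you mention is unnecessary. In the paper, by contrast, the $\nu$-integral is only conditionally convergent, and swapping it with the $\rho$-integral really needs an $e^{-\epsilon\nu^2}$ regularization, which is exactly your continuation. The price is that all the content sits in the heat-kernel formula. Your stated fallback (Abel transform plus spectral decomposition) is precisely the paper's computation with $e^{-s\nu^2}$ in place of $e^{-it\nu^2}$.

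If you verify Step 2 yourself rather than cite it, three things need fixing.

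\emph{The radial operator has the wrong sign.} In the regular gauge $B(\cosh r-1)\,d\phi$ about $z'$, $H_B$ acts on the stripped profile as
\[
L=-\partial_r^2-\coth r\,\partial_r+B^2\tanh^2\tfrac r2=-\partial_r^2-\coth r\,\partial_r+B^2-\frac{B^2}{\cosh^2(r/2)},
\]
not with $+B^2/\cosh^2(r/2)$. With your sign the potential decays at infinity, so the threshold is $\frac14$ rather than $B^2+\frac14$, and the check with the factor $e^{-s(B^2+1/4)}$ will fail. A quick test: $F=\cosh^{-2B}(r/2)=P^0_{B,B}(r)$ must satisfy $LF=BF$, and it does so only for the corrected $L$.

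\emph{Uniqueness is missing.} Solving the heat equation with $\delta$ initial data identifies the heat kernel only within a uniqueness class. You need a growth bound on the ansatz, or to show directly that it is the $L^2$ semigroup.

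\emph{Step 1 inherits a sign slip from the statement.} For $H_B$ acting in $z$, the covariant phase is $e^{+iB\vartheta}=\bigl(\frac{z'-\bar z}{z-\bar z'}\bigr)^B$, which is the factor in Fay's expansion. For instance, with $z'=i$ the function $y^B(z+i)^{-2B}$ satisfies $D_B\psi=B(B-1)\psi$ and equals a constant times $e^{iB\vartheta}\cosh^{-2B}(r/2)$. The paper's own resolvent proposition also says that $e^{-iB\vartheta}Q^0_{s,B}$ solves the $D_{-B}$ equation in $z$. Since the radial profile is even in $B$, this slip is harmless for the dispersive estimates.
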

	
	\begin{remark}
		If \(B=0\), we have
        \[
		\Psi(\rho,0)=\frac{1}{\sqrt{2(\cosh\rho-\cosh r)}}
		\]
       and the formula \eqref{eq:magnetic solution} is reduced to the solution of the Schr\"{o}dinger equation associated with the free Laplacian \(-\Delta_{\mathbb{H}}=y^2 \Bigl( \frac{\partial^2}{\partial x^2} + \frac{\partial^2}{\partial y^2} \Bigr)\) on the hyperbolic plane \(\mathbb{H}\), which has already been obtained in \cite{Banica2007} by using the hyperbolic Fourier transform.
	\end{remark}
	
	From the explicit representation \eqref{eq:magnetic solution}, we derive the following dispersive estimates.
	
	\begin{theorem}[Dispersive estimate]
		\label{cor:decay}
		Let \(H_B\) be the self-adjoint Hamiltonian \eqref{eq:LandauHamiltonian} with \(|B|<\tfrac12\), then we have the dispersive estimate
		\begin{equation}\label{dis:HB}
			\|e^{-itH_B}\|_{L^1(\mathbb{H})\rightarrow L^\infty(\mathbb{H})}\le C_B\times
            \begin{cases}
                |t|^{-1},\quad t\ll 1,\\
                |t|^{-\frac{3}{2}},\quad t\gg 1,
            \end{cases}
		\end{equation}
		where \(C_B>0\) is a constant independent of \(t\).
	\end{theorem}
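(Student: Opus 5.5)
By Theorem \ref{thm:Main}, the integral kernel of $e^{-itH_B}$ is $c|t|^{-3/2}e^{-it(B^2+\frac14)}\kappa(z,z')$. Since $|e^{-iB\vartheta}|=1$, the dispersive estimate \eqref{dis:HB} reduces to the kernel bound
\[
\sup_{r\ge0}\Bigl|\int_r^\infty e^{i\frac{\rho^2}{4t}}\bigl[\Psi(\rho,B)+\Psi(\rho,-B)\bigr]\rho\,d\rho\Bigr|\lesssim_B \begin{cases}|t|^{1/2},& |t|\ll1,\\ 1,& |t|\gg1.\end{cases}
\]
First I record the size of $\Psi$. Write $s=\sqrt{(\cosh\rho-\cosh r)/2}$, so that $\sinh^2(\rho/2)-\sinh^2(r/2)=s^2$. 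The base of the power in \eqref{eq:Psi} is $(\cosh(\rho/2)+s)/\cosh(r/2)\ge1$. It grows like $e^{\rho-r/2}$ when $\rho\gg r$, and it equals $1+O(s)$ near $\rho=r$. Since $|B|<\frac12$, the factor $(\cdot)^{\mp2B}$ is at most $C e^{2|B|(\rho-r/2)}$. Hence $|\Psi(\rho,\pm B)|\lesssim (\cosh\rho-\cosh r)^{-1/2}e^{2|B|\rho}$. For large $\rho$ this behaves like $e^{-(\frac12-|B|)\rho}\cdot e^{|B|\rho}$ up to $r$-dependent factors. This is not absolutely integrable against $\rho\,d\rho$ when $|B|$ is close to $\frac12$, so absolute bounds will not suffice at infinity and oscillation must be used there.

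Next, substitute $u=\rho^2$, so that $\rho\,d\rho=du/2$ and the integral becomes $\frac12\int_{r^2}^\infty e^{iu/(4t)}F(u)\,du$, with $F(u)=\Psi(\sqrt u,B)+\Psi(\sqrt u,-B)$. I split $[r,\infty)$ into $[r,\rho_0]$ and $[\rho_0,\infty)$, where $\rho_0=\sqrt{r^2+|t|}$ for small $t$ and $\rho_0=\max(r+1,2r)$ for large $t$.

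On $[r,\rho_0]$ I bound absolutely. The endpoint singularity is $(\cosh\rho-\cosh r)^{-1/2}\sim(\sinh r\,(\rho-r))^{-1/2}$, which is integrable. This piece is controlled by $\int_{r^2}^{\rho_0^2}(\cosh\sqrt u-\cosh r)^{-1/2}\,du$. Using $\cosh\sqrt u-\cosh r\gtrsim (u-r^2)\,\frac{\sinh r}{r}$ (convexity in $u$), the piece is at most $\int_0^{\rho_0^2-r^2}v^{-1/2}\,dv\lesssim (\rho_0^2-r^2)^{1/2}$. With the small-$t$ choice this equals $|t|^{1/2}$, which is exactly the loss needed to produce $|t|^{-1}$. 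For large $t$ the same computation on a bounded window gives $O_B(1)$, uniformly in $r$, thanks to the factor $r/\sinh r\le1$. I also need $(\cdot)^{\mp2B}=O(1)$ there, which holds because $s$ is bounded for $\rho\le\rho_0$ when $\rho_0-r\lesssim1$. If instead $r\lesssim1$ and $\rho_0=2r$, the bound is still $O(1)$.

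The tail $[\rho_0,\infty)$ is the main obstacle. I integrate by parts in $u$: $e^{iu/4t}=\frac{4t}{i}\partial_u e^{iu/4t}$. The boundary term is $|t||F(\rho_0^2)|$, and the remaining term is $|t|\int|F'(u)|\,du$. Since $F$ eventually decays (as $e^{-(\frac12-|B|)\rho}$ times bounded factors), $F'$ has essentially one sign at large $u$. Hence $\int|F'|\lesssim |F(\rho_0^2)|+$ a bounded-variation contribution near $\rho_0$. For small $t$, $|F(\rho_0^2)|\lesssim(\rho_0^2-r^2)^{-1/2}=|t|^{-1/2}$ from the same convexity bound, so the tail is $O(|t|\cdot|t|^{-1/2})=O(|t|^{1/2})$. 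For large $t$ the factor $|t|$ is harmful, so I do not integrate by parts there. Instead I argue that at large $t$ the phase $\rho^2/4t$ varies slowly on the scale where the non-integrable part lives. I would pair the $\pm B$ terms first: $\Psi(\rho,B)+\Psi(\rho,-B)=(\cosh\rho-\cosh r)^{-1/2}\cosh(2B\log X)\cdot\sqrt2^{-1}$, where $X$ is the base. I would then exploit that $\int_{\rho_0}^\infty e^{i\rho^2/4t}e^{-\epsilon\rho}\rho\,d\rho$ with $\epsilon=\frac12-|B|>0$ is genuinely decaying, since $e^{-(\frac12-|B|)\rho}\rho$ is integrable. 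This should give a uniform bound $C_B$, with the constant blowing up as $|B|\to\frac12$. Verifying precisely that the combined exponent is $-(\frac12-|B|)$, rather than something positive, is the delicate computation that decides whether the large-$t$ bound holds.
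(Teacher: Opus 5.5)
\textbf{Verdict: there is a genuine gap in the large-$t$ regime.} Your reduction to bounding $\sup_r|L_t(r)|$ matches the paper. Here $L_t(r)=\int_r^\infty e^{i\rho^2/4t}K(\rho)\,\rho\,d\rho$ and $K=\Psi(\cdot,B)+\Psi(\cdot,-B)$. Your small-time scheme (absolute bound on $[r,\sqrt{r^2+|t|}]$, integration by parts beyond) is also the paper's.

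The unproved step is the large-$t$ bound, i.e.\ the whole $|t|^{-3/2}$ claim. You leave it explicitly undecided, and a miscomputation pushed you toward an oscillatory argument. The base in \eqref{eq:Psi} does not grow like $e^{\rho-r/2}$: its numerator is of size $e^{\rho/2}$, not $e^{\rho}$. Precisely, set $\cosh\frac{\rho}{2}=\cosh\frac r2\cosh X$, as the paper does. Then $\sqrt{(\cosh\rho-\cosh r)/2}=\cosh\frac r2\sinh X$, the base equals $e^{X}\le 2\cosh X\le 4e^{(\rho-r)/2}$, and
\[
K(\rho)=\frac{\cosh(2BX)}{\cosh\frac r2\,\sinh X}.
\]
For $\rho\ge r+1$ one has $\cosh\rho-\cosh r\sim e^{\rho}$, hence
\[
K(\rho)\lesssim e^{-\rho/2}e^{|B|(\rho-r)}=e^{-r/2}e^{-\delta(\rho-r)},\qquad \delta=\tfrac12-|B|>0.
\]
On $[r,r+1]$ only the integrable singularity $(\cosh\rho-\cosh r)^{-1/2}$ remains. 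So $\rho K$ is absolutely integrable, and for every $t$
\[
|L_t(r)|\le\int_r^\infty\rho K\,d\rho\lesssim\delta^{-2}(1+r)e^{-r/2}.
\]
This absolute estimate (the paper's $I_1$, $I_2$) is the entire large-time proof; no oscillation is used.

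Oscillation could not have rescued your version anyway. Had your growth rate been right, then for $|B|>\tfrac14$ the amplitude $\rho e^{(2|B|-\frac12)\rho}$ would grow exponentially, and the integral defining $\kappa$ would diverge for every $t$. Moreover $K>0$ and $e^{i\rho^2/4t}\to 1$ as $t\to\infty$, so a slowly varying phase gives no cancellation there.

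\textbf{Small-time tail.} Replace ``$F'$ has essentially one sign'' with the exact fact. For $2|B|<1$,
\[
\frac{d}{dX}\,\frac{\cosh 2BX}{\sinh X}=\frac{2B\sinh(2BX)\sinh X-\cosh(2BX)\cosh X}{\sinh^2X}<0,
\]
and $X$ increases with $\rho$, so $K$ (your $F$) is strictly decreasing on the whole ray. Hence $\int_{\rho_0^2}^\infty|F'(u)|\,du=F(\rho_0^2)$ exactly, and your bound $|t|\,F(\rho_0^2)\lesssim|t|^{1/2}$ follows with no extra ``bounded-variation contribution''. This is slightly cleaner than the paper, which bounds $K'$ pointwise on $[\sqrt{r^2+t},r+1]$ and on $[r+1,\infty)$ separately.

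\textbf{Large-$t$ window.} Your window $[r,\max(r+1,2r)]$ is unbounded for large $r$. Your justification that the power factor is $O(1)$ there therefore does not apply. Use the window $[r,r+1]$ and treat $[r+1,\infty)$ with the absolute bound above.
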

	
	\begin{remark}
		~
		\begin{enumerate}[$1)$]
			\item Interpolating between the dispersive estimate \eqref{dis:HB} and the conservation law of mass \(\|e^{-itH_B}f\|_{L^2(\mathbb{H})}=\|f\|_{L^2(\mathbb{H})},\forall t\in\mathbb{R}\), we obtain 
            \begin{equation}\label{dis:HB}
			\|e^{-itH_B}f\|_{L^{p'}(\mathbb{H})}\le C_B\|f\|_{L^p(\mathbb{H})}\times
            \begin{cases}
                |t|^{1-\frac{2}{p}},\quad t\ll 1,\\
                |t|^{-\frac{3}{2}},\quad t\gg 1,
            \end{cases}
		\end{equation}
        for all $2\le p\le\infty$.
			\item Note that the large‑time decay rate is better than the Euclidean counterpart \(|t|^{-1}\). This phenomenon has already been observed in \cite{anker2009,Banica2007} in the free case and can be regarded as an effect of hyperbolic geometry on dispersion.
			\item As we shall see in the proof of Theorem~\ref{cor:decay}, the constant \(C_B\) can be taken to be \(C\,(1-2|B|)^{-2}\) for some absolute constant \(C>0\) and hence \(|B|=\frac{1}{2}\) is a threshold for the validity of the dispersive estimate \eqref{dis:HB}.
		\end{enumerate}
	\end{remark}
	
	From the above dispersive estimate, one can derive the corresponding Strichartz estimates by the standard \(TT^*\) argument of Keel-Tao \cite{keel-tao}. Owing to the better decay rate for large times, we obtain the associated Strichartz estimates for a wider range of admissible indices. Specifically, the admissible pairs \((p,q)\) for \(H_B\) on \(\mathbb{H}\) form the upper triangle
	\[
	\Delta=\Bigl\{ \bigl(\tfrac1p,\tfrac1q\bigr) \in \bigl[0,\tfrac12\bigr]\times\bigl[0,\tfrac12\bigr] : \tfrac2p+\tfrac2q \geqslant 1 \Bigr\}.
	\]
	
	\begin{corollary}[Strichartz estimate]
		\label{cor:Strichartz}
		Let \(H_B\) be the self-adjoint Hamiltonian \eqref{eq:LandauHamiltonian} with \(|B|<\tfrac12\) and let \(u\) solve the inhomogeneous equation \(i\partial_t u = H_B u + F\) with initial datum \(u(0)=f\), then there exists a constant \(C_B>0\) such that the following Strichartz estimate
		\begin{equation}
			\|u\|_{L^p_t(\mathbb{R};L^q_z(\mathbb{H}))}\leqslant C_B\bigl(\|f\|_{L^2_z(\mathbb{H})}+\|F\|_{L^{\tilde{p}'}_t(\mathbb{R};L^{\tilde{q}'}_z(\mathbb{H}))}\bigr)
		\end{equation}
        holds for all pairs \((p,q),(\tilde{p},\tilde{q})\in\Delta\).
	\end{corollary}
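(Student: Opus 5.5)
The plan is to feed the dispersive estimate of Theorem~\ref{cor:decay} into the abstract Keel--Tao framework \cite{keel-tao}, together with the unitarity of \(U(t)=e^{-itH_B}\) on \(L^2(\mathbb{H})\). First I will combine the small- and large-time bounds of \eqref{dis:HB} into one decay function. Interpolating with \(\|U(t)U(s)^*\|_{L^2\to L^2}=1\) gives, for \(2\le q\le\infty\),
\[
\|U(t)U(s)^*\|_{L^{q'}\to L^{q}}\le C_B\, h_q(t-s),\qquad h_q(\tau)=\begin{cases}|\tau|^{-(1-\frac2q)}, & |\tau|\le1,\\ |\tau|^{-\frac32(1-\frac2q)}, & |\tau|\ge1.\end{cases}
\]
The large-time side only needs the dispersive bound to hold for \(|t|\ge 1\). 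Here \(1-\frac2q\) and \(\frac32(1-\frac2q)\) are the correct interpolated exponents.

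Next I will run the \(TT^*\) argument. Let \(T f=U(t)f\). Then \(\|T\|_{L^2\to L^p_tL^q_z}^2=\|TT^*\|_{L^{p'}_tL^{q'}_z\to L^p_tL^q_z}\). Moreover,
\[
\|TT^*F(t)\|_{L^q}\le C_B\int_{\mathbb{R}} h_q(t-s)\,\|F(s)\|_{L^{q'}}\,ds,
\]
so it suffices to bound convolution with \(h_q\) from \(L^{p'}_t\) to \(L^p_t\). I will split \(h_q=h_q\mathbf 1_{|\tau|\le1}+h_q\mathbf 1_{|\tau|>1}\).

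\emph{Local part.} Hardy--Littlewood--Sobolev handles the case \(\frac2p+\frac2q=1\) with \(p>2\). When \(\frac2p+\frac2q>1\), the local piece lies in \(L^{r}\) for the required Young exponent \(1+\frac1p=\frac2{p'}\cdot\ldots\), i.e. \(\frac1r=\frac2p\). Integrability near \(0\) needs \(r(1-\frac2q)<1\), which is exactly \(\frac2p+\frac2q>1\). So Young's inequality applies.

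\emph{Global part.} The tail lies in \(L^r\) whenever \(\frac32(1-\frac2q)\,r>1\), and if \(q=2\) it must be handled separately. With \(r=p/2\) this reads \(\frac32(1-\frac2q)>\frac2p\). Since \(\frac2p\le1-\frac2q\) fails in the upper triangle, this route is not sufficient for all pairs. I therefore expect this to be the main obstacle.

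The fix is the standard one for hyperbolic space (Anker--Pierfelice, Banica). For \(|t|\ge1\), interpolate the \(L^1\to L^\infty\) bound \(|t|^{-3/2}\) against \(L^2\to L^2\) in a way that also uses the kernel's exponential decay in \(r\) (Kunze--Stein phenomenon). This should give \(L^{\tilde q'}\to L^q\) bounds decaying like \(|t|^{-3/2}\) for all \(q,\tilde q>2\). The tail is then in \(L^1_t\), and Young gives \(L^{\tilde p'}\to L^p\) for every \(p,\tilde p\ge2\). I will try to extract this from the explicit kernel in Theorem~\ref{thm:Main}: \(|\kappa|\) is bounded by the free kernel times a factor like \(e^{|B|r}\), since \(|e^{-iB\vartheta}|=1\) and \(\Psi\) grows at most like \(e^{2|B|\rho}\) relative to \(\Psi(\rho,0)\). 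The free Kunze--Stein argument goes through when \(|B|<\frac12\), the loss being the \((1-2|B|)^{-2}\) constant.

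Finally, for the endpoint \(p=2\), \(q=\infty\) is excluded in 2D, but points with \(\frac1q>0\) and \(p=2\) are allowed. For those I will invoke the Keel--Tao bilinear endpoint argument locally in time, plus the integrable tail globally. The inhomogeneous estimate with different pairs \((p,q),(\tilde p,\tilde q)\) then follows by the same bilinear bound and the Christ--Kiselev lemma (or Keel--Tao's retarded version when \(p=\tilde p=2\)).
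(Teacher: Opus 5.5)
Your overall route matches the paper's. The paper uses Corollary~\ref{cor:decay stronger}, where Kunze--Stein gives $L^{\tilde q'}\to L^q$ decay $|t|^{-3/2}$ for $|t|\ge1$ and all $q,\tilde q>2$, followed by $TT^*$ with Young/HLS in time. Your analysis of the time convolution is correct, including your identification of the large-time obstacle for $q$ near $2$. The paper handles the local part through the mixed small-time bounds $|t|^{-3/2}\|L_t\|_{L^q}\lesssim|t|^{-1}$, so that Young/HLS covers all pairs and retarded terms directly; your Keel--Tao/Christ--Kiselev variant is not where the problem lies.

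The gap is in the one step with real content: the bound on the radial kernel $|\kappa|=|L_t(r)|$. Kunze--Stein has to be applied for \emph{every} $q>2$. The $L^{q,1}$ norm of a radial decreasing majorant $\phi$ is comparable to $\int_0^1\phi(r)r^{2/q-1}\,dr+\int_1^\infty\phi(r)e^{r/q}\,dr$. So you need $|L_t(r)|\lesssim(1+r)^Ne^{-r/2}$ with no loss in the exponent, and the comparison you propose does not give this. A factor $e^{|B|r}$ on top of the free bound leaves $e^{-(\frac12-|B|)r}$. That only yields $\|L_t\|_{L^{q,1}}<\infty$ for $q>2/(1-2|B|)$, so the admissible region would shrink. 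Inserting $\Psi(\rho,-|B|)\lesssim e^{2|B|\rho}\Psi(\rho,0)$ into $\int_r^\infty\rho\,\Psi\,d\rho$ is worse: it produces $\int_{r+1}^\infty\rho\,e^{(2|B|-\frac12)\rho}\,d\rho$, which diverges for $|B|\ge\frac14$ and destroys the $e^{-r/2}$ decay even for $|B|<\frac14$.

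The missing observation is that the magnetic weight is normalized at $\rho=r$. Set $\cosh\frac\rho2=\cosh\frac r2\cosh X$ as in the paper; then the weight in $\Psi(\rho,\pm B)$ is exactly $e^{\mp2BX}$. Since $\frac1{\sqrt2}\sqrt{\cosh\rho-\cosh r}\le\sinh\frac\rho2$ and $\cosh\frac r2\ge\frac12e^{r/2}$, one gets $e^{X}\le2e^{(\rho-r)/2}$, hence $|K(\rho,B)|\lesssim\Psi(\rho,0)\,e^{|B|(\rho-r)}$.
\begin{itemize}
\item For $\rho\ge r+1$ this is $\lesssim e^{-\rho/2}e^{|B|(\rho-r)}=e^{-r/2}e^{-\delta(\rho-r)}$ with $\delta=\frac12-|B|$.
\item The integrable singularity on $[r,r+1]$ contributes $O((1+r)e^{-r/2})$.
\end{itemize}
Integrating gives $|L_t(r)|\lesssim\delta^{-2}(1+r)e^{-r/2}$ uniformly in $t$; these are the terms $I_1,I_2$ in the proof of Proposition~\ref{pro:upper bound}. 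This is where $|B|<\frac12$ and the $(1-2|B|)^{-2}$ constant genuinely come from, and it is what makes $\|L_t\|_{L^{q,1}}$ finite for all $q>2$.

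Finally, your plan to treat $q=2$ ``separately'' cannot work. With $F=0$, mass conservation gives $\|u\|_{L^p_tL^2_z}=\|f\|_{L^2}\,\|1\|_{L^p(\mathbb{R})}=\infty$ for every $p<\infty$. Those points of $\Delta$ must therefore be excluded, as must $(2,\infty)$, which you already set aside. What either argument actually proves is the estimate on $\Delta\cap\{2<q<\infty\}$, together with $(p,q)=(\infty,2)$.
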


	The rest of this paper is structured as follows. In Section~\ref{sec:preliminaries} we review the analysis on the hyperbolic plane. In Section~\ref{sec:spectral analysis} we provide a rigorous spectral analysis of \(H_B\) and derive an explicit expression for the evolution operator \(e^{-itH_B}\) (Theorem~\ref{thm:Main}). In Section~\ref{sec:proof of cor}, we prove Theorem~\ref{cor:decay} and Corollary~\ref{cor:Strichartz}.
	
	\section{Preliminaries}
	\label{sec:preliminaries}
	
	In this section, we review the geometry of the hyperbolic plane and fix the notation used throughout the paper.
	
	In what follows, we shall employ the notation \(A = O(B)\) or \(A \lesssim B\) to denote \(A \le C B\) for some constant \(C > 0\). If both \(A \lesssim B\) and \(B \lesssim A\) hold, we simply write \(A \sim B\).
	
There are several equivalent models for the hyperbolic plane, One of which is given by the upper half of the complex plane \(\mathbb{C}\)
\[
\mathbb{H} := \{ z = x+iy \in \mathbb{C} : y > 0 \}
\]
equipped with the hyperbolic metric $ds^2 = y^{-2}(dx^2 + dy^2)$. The hyperbolic distance $r=r(z,z')$ between two points \(z = x+iy,z' = x'+iy' \in \mathbb{H}\) is defined via
\[
\cosh r = 1 + \frac{|z-z'|^2}{2\Im z\Im z'}.
\]
For later use, we introduce 
\begin{equation}
	\label{eq:hyperbolic distance1}
	\sigma(z,z') := \frac{\cosh r+1}{2} = \frac{|z-\bar{z'}|^2}{4yy'}, \qquad
	\tau(z,z') := \frac{\cosh r+1}{\cosh r-1} = \frac{|\bar{z}-z'|^2}{|z-z'|^2}.
\end{equation}
The hyperbolic plane \(\mathbb{H}\) can be alternatively realized as the upper sheet of the standard hyperboloid in \(\mathbb{R}^3\) 
	\[
	\{(x_0,x_1,x_2):x_0^2-x_1^2-x_2^2=1,\;x_0\geqslant 1\}
	\]
	 equipped with the Minkowski metric
	\[
	dl^2=-dx_0^2+dx_1^2+dx_2^2,
	\]
	or as the homogeneous space \(\mathbb{G}/\mathbb{K}\) with \(\mathbb{G}=SO(2,1)\) and \(\mathbb{K}=SO(2)\).
	
	The group \(\mathbb{G}=SO_0(2,1)\) is semisimple and therefore unimodular, while \(\mathbb{K}=SO(2)\) is compact. We normalize the Haar measures on \(\mathbb{K}\) and \(\mathbb{G}\) such that \(\int_{\mathbb{K}}1\,dk=1\) and
	\[
	\int_{\mathbb{G}}f(g\cdot \mathbf{0})\,dg = \int_{\mathbb{H}} f(x)\,d\omega
	\]
	for every \(f\in C_c(\mathbb{H})\). Given two functions \(f_1,f_2\in C_c(\mathbb{G})\), their convolution is defined by
	\[
	(f_1*f_2)(h)=\int_{\mathbb{G}}f_1(g)f_2(g^{-1}h)\,dg.
	\]
	A function \(f:\mathbb{G}\to \mathbb{C}\) is called \(\mathbb{K}\)-biinvariant if
	\[
	f(k_1gk_2)=f(g)\qquad \forall k_1,k_2\in \mathbb{K}.
	\]
	Likewise, a function \(f:\mathbb{H}\to \mathbb{C}\) is called \(\mathbb{K}\)-invariant (or radial) if
	\[
	f(k\cdot z)=f(z)\qquad \forall k\in \mathbb{K},\;z\in \mathbb{H}.
	\]
	If \(f,K\in C_c(\mathbb{H})\) and \(K\) is \(\mathbb{K}\)-invariant, we set
	\[
	(f*K)(z)=\int_{\mathbb{G}}f(g\cdot \mathbf{0})\,K(g^{-1}\cdot z)\,dg.
	\]
	
	We now recall the celebrated Kunze–Stein phenomenon. A sharp version, due to Cowling, Meda and Setti (see \cite{cowling1997}) and subsequently improved by Ionescu \cite{ionescu2000}, states as follows.
	
	\begin{proposition}
		\label{thm:the Kunze–Stein phenomenon}
		If \(G\) is a noncompact connected semisimple Lie group of real rank one with finite center, then one has \footnote{The usual convention is that if \(\mathcal{U},\mathcal{V}\) and \(\mathcal{W}\) are Banach spaces of functions on \(G\), the notation \(\mathcal{U}*\mathcal{V}\subset \mathcal{W}\) indicates both the set inclusion and the associated norm inequality.}
		\[
		L^{2,1}(G)*L^{2,1}(G)\subset L^{2,\infty}(G),
		\]
		which implies 
		\[
		L^{p,u}(G)*L^{p,v}(G)\subset L^{p,w}(G)
		\]
		for \(p\in (1,2)\) and \((u,v,w)\in [1,\infty]^3\) satisfying \(1+1/w < 1/u+1/v\). Here the Lorentz spaces \(L^{q,\alpha}(G)\) are variants of the classical Lebesgue spaces with norm defined by
		\[
		\| f \|_{L^{q,\alpha}} = \begin{cases} 
			\displaystyle\Bigl[ \int_0^{+\infty} \bigl( s^{1/q} f^*(s) \bigr)^\alpha \,\frac{ds}{s} \Bigr]^{1/\alpha}, & 1 \leqslant \alpha < \infty,\\[1.2em]
			\displaystyle\sup_{s > 0} s^{1/q} f^*(s), & \alpha = \infty,
		\end{cases}
		\]
		where \(f^*\) denotes the decreasing rearrangement of \(f\).
	\end{proposition}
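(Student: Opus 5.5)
Since this proposition is quoted from \cite{cowling1997,ionescu2000}, I would only reconstruct the argument along Ionescu's lines. It has two parts: a restricted-type endpoint estimate, followed by interpolation.

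\emph{Step 1: reduction to sets.} By the duality $(L^{2,\infty})^*\supset L^{2,1}$ and the characterization of $L^{2,1}$ through characteristic functions, the inclusion $L^{2,1}*L^{2,1}\subset L^{2,\infty}$ follows from the restricted trilinear bound
\[
\int_G\int_G \mathbf 1_E(g)\,\mathbf 1_F(g^{-1}h)\,\mathbf 1_H(h)\,dg\,dh\lesssim |E|^{1/2}|F|^{1/2}|H|^{1/2}
\]
for all measurable sets $E,F,H\subset G$ of finite measure. This gives strong type in one variable, via the dual statement with $\mathbf 1_H$ replaced by an $L^{2,1}$ atom. The final step uses multilinear real interpolation (restricted weak type implies Lorentz bounds).

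\emph{Step 2: dyadic decomposition in the Cartan radius.} I would write $G=KA^+K$ and let $|g|$ denote the radial coordinate. I would then split the integral according to $|g^{-1}h|\in[j,j+1)$, that is, insert $\mathbf 1_F\cdot\mathbf 1_{\{j\le|\cdot|<j+1\}}$. For each annulus there are two competing bounds.
\begin{itemize}
\item An $L^2$ bound from the Herz majorizing principle. Right convolution by a positive $K$-biinvariant function $\chi$ has $L^2$ operator norm $\int_G\chi\,\varphi_0$, where $\varphi_0(a_r)\sim(1+r)e^{-\rho r}$ and the Haar density is $\sim e^{2\rho r}$. Applied to $\chi_j=\mathbf 1_{\{j\le|\cdot|<j+1\}}$, and then to the $K$-biinvariant envelope of $\mathbf 1_F$ on the annulus, this gives a term of order $(1+j)e^{-\rho j}\,|F\cap\mathrm{annulus}|\,|E|^{1/2}|H|^{1/2}$, up to an averaging over $K$.
\item A trivial bound, namely $\le\min(|E|,|H|)\cdot|F\cap\mathrm{annulus}|$, and also $\le |E|\,|H|$.
\end{itemize}

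\emph{Step 3: summation over $j$ (main obstacle).} Summing the Herz bound alone loses the factor $(1+j)$. The fix is to use the $K$-invariance more cleverly. One averages $\mathbf 1_E$ and $\mathbf 1_H$ over $K$ on the appropriate sides, and shows that the contribution of annulus $j$ is at most
\[
\lesssim e^{-\rho j}\,|F_j|\;|E|^{1/2}|H|^{1/2}\cdot\min\bigl(1,\ (\text{geometric factor})\bigr),
\]
with a geometric factor that decays once $e^{2\rho j}$ exceeds $|E|,|H|$. The decay comes from Ionescu's lemma: for $g$ in a fixed annulus, the set $gK a K$ spreads over a $K$-orbit of measure $\sim e^{2\rho j}$.

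Optimizing the choice of bound in each annulus should then make the sum over $j$ geometric rather than harmonic. Concretely, one uses the trivial bound for $e^{2\rho j}\lesssim |F|$ and the improved Herz bound beyond. This yields the required $|E|^{1/2}|F|^{1/2}|H|^{1/2}$ via Cauchy–Schwarz in $j$ with $\sum_j|F_j|=|F|$. I expect this balancing step to be where the $(1+r)$ factor in $\varphi_0$ must be absorbed, and it is the genuinely delicate point.

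\emph{Step 4: the general case.} For $p\in(1,2)$ I would combine the endpoint with the trivial estimate $L^1*L^1\subset L^1$. Bilinear real interpolation (Lions–Peetre, or O'Neil's theorem) between the two estimates yields $L^{p,u}*L^{p,v}\subset L^{p,w}$ whenever $1+1/w<1/u+1/v$. The strict inequality is exactly what the bilinear real method permits away from the endpoint.
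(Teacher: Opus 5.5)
The paper does not prove this proposition; it quotes it from Cowling and Ionescu, so a citation is all that is needed here. As a reconstruction, your Steps 1 and 4 are sound. Positivity of the trilinear form, the layer-cake description of $L^{2,1}$ and the duality $(L^{2,1})^*=L^{2,\infty}$ reduce the endpoint to characteristic functions. Bilinear real interpolation with $L^1*L^1\subset L^1$ then gives the Lorentz range.

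Steps 2--3, however, carry the whole content of the endpoint, and they fail. The per-annulus bound you aim for, $\langle \mathbf 1_E*\mathbf 1_{F_j},\mathbf 1_H\rangle\lesssim e^{-\rho j}|F_j|\,|E|^{1/2}|H|^{1/2}$ (with or without the factor $1+j$), is false for general sets. Take:
\begin{itemize}
\item $U$ a small neighbourhood of $e$ with $|u|\le \frac14$ on $U$;
\item $E=\{|g|\le R\}$;
\item $F=U a_{j+1/2}$, which lies in the $j$-th annulus and has $|F|=|U|$ by unimodularity;
\item $H=EF$.
\end{itemize}
Since $|gu|\le |g|+|u|$, you get $|H|=|EU|\le |\{|g|\le R+\frac14\}|\lesssim |E|$. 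Also $gF\subset H$ for every $g\in E$, so the form equals $|E|\,|F|\gtrsim |E|^{1/2}|F|\,|H|^{1/2}$, with no decay in $j$. Here $e^{2\rho j}\gg |F|$, which is exactly the regime where you switch to the ``improved Herz bound''.

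The error lies in passing from Herz's principle to $|F_j|$. The $K$-biinvariant envelope of $F_j$ can be the whole annulus $S_j=\{j\le |g|<j+1\}$, which only gives $(1+j)e^{\rho j}|E|^{1/2}|H|^{1/2}$. Replacing $\mathbf 1_{F_j}$ by its $K$-biaverage is legitimate only when $E$ and $H$ are right $K$-invariant. In the example, $HK$ has measure $\sim e^{2\rho(R+j)}$, so ``averaging over $K$'' changes the quantity rather than majorizing it.

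Moreover, no $L^2$ argument can remove the factor $1+j$. For the annulus, $\int \mathbf 1_{S_j}\varphi_0\sim (1+j)e^{\rho j}$ is the exact $L^2$ operator norm of convolution by $\mathbf 1_{S_j}$. Yet the theorem itself, since $\|\mathbf 1_{S_j}\|_{L^{2,1}}\sim e^{\rho j}$, forces the restricted bound $\langle \mathbf 1_E*\mathbf 1_{S_j},\mathbf 1_H\rangle\lesssim e^{\rho j}|E|^{1/2}|H|^{1/2}$.

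What is missing is therefore a genuinely restricted-type geometric estimate. It has to be combined with the trivial bound $|F_j|\min(|E|,|H|)$ in a way that depends on $|E|$ and $|H|$, not merely on $|F|$ versus $e^{2\rho j}$. Your Step 3 gestures at such an ingredient (``Ionescu's lemma'', an unspecified geometric factor, ``should'', ``I expect''), but it neither states it precisely nor derives a per-annulus bound from it. The bound it is meant to produce is, as shown above, not true. The proposal thus has a gap exactly at the step that constitutes the theorem; for this paper, simply cite Ionescu's result.
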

	
	From Proposition~\ref{thm:the Kunze–Stein phenomenon}, one immediately deduces
	\[
	L^{q'}(\mathbb{K}\setminus G) * L^{q'}(G/\mathbb{K}) \subset L^{q',\infty}(\mathbb{K}\setminus G/\mathbb{K}) \qquad \forall q > 2.
	\]
	By this inclusion we mean that there exists a constant \(C_q > 0\) such that
	\[
	\| f * g \|_{L^{q',\infty}} \leqslant C_q \| f \|_{L^{q'}} \| g \|_{L^{q'}} \qquad \forall f \in L^{q'}(\mathbb{K}\setminus G),\;\forall g \in L^{q'}(G/\mathbb{K}).
	\]
	Hence, by duality, we have
	\begin{equation}
		\label{eq:the Kunze–Stein phenomenon1}
		\|f*g\|_{L^q} \leqslant C_q \|f\|_{L^{q'}}\,\|g\|_{L^{q,1}} \qquad f\in L^{q'}(G/\mathbb{K}),\;\forall g\in L^{q,1}(\mathbb{K}\setminus G/\mathbb{K}).
	\end{equation}
	
	These group‑level estimates can be transferred to the symmetric space by the natural identifications between functions on \(\mathbb{H}\) and \(\mathbb{K}\)-invariant functions on \(G = SO(2,1)\).
	
	For any \(f \in C_c(\mathbb{H})\) define \(\tilde f(g) = f(g\cdot \mathbf{0})\). Then \(\tilde f \in C_c(G/\mathbb{K})\) and \(\|f\|_{L^{p,q}(\mathbb{H})} = \|\tilde f\|_{L^{p,q}(G/\mathbb{K})}\).  
	A function \(K \in C_c(\mathbb{H})\) is radial (i.e., \(K(k\cdot x) = K(x)\)) if and only if \(\tilde K\) is \(\mathbb{K}\)-biinvariant. Moreover,
	\[
	(f * K)(x) = \int_G f(g\cdot \mathbf{0})\,K(g^{-1}\cdot x)\,dg
	\]
	translates into group convolution as \(\widetilde{(f * K)}(h) = (\tilde f * \tilde K)(h)\). The dual inequality \eqref{eq:the Kunze–Stein phenomenon1} therefore yields the corresponding estimate on \(\mathbb{H}\):
	\begin{equation}
		\label{eq:the Kunze–Stein phenomenon2}
		\|f * K\|_{L^q(\mathbb{H})} \le C_q \|f\|_{L^{q'}(\mathbb{H})} \|K\|_{L^{q,1}_{\mathrm{rad}}(\mathbb{H})} \qquad \forall f \in L^{q'}(\mathbb{H}),\; K \in L^{q,1}_{\mathrm{rad}}(\mathbb{H}),
	\end{equation}
	where \(L^{q,1}_{\mathrm{rad}}(\mathbb{H})\) denotes the subspace of radial Lorentz functions.

\section{Representation formula of the Schr\"odinger kernel}
\label{sec:spectral analysis}

In this section, we obtain a proper representation formula for the associated Schr\"odinger kernel, which can be derived from \cite{Fay77}. To this end, we consider the Maass Laplacian
\begin{equation}
	\label{eq:Maass laplacian}
	D_B:=B^2-H_B=y^2\left(\frac{\partial^2}{\partial x^2}+\frac{\partial^2}{\partial y^2}\right)-2iBy\frac{\partial}{\partial x}
\end{equation}
and set
\[\lambda:=s(1-s)\quad \mathrm{for}~\Re s>\frac{1}{2}.\]

For any $z,z_0\in\mathbb{H}$, a function $K(z,z_0)$ is called point-pair invariant if it only depends on the hyperbolic distance \(r=d(z,z_0)\) of $z,z_0\in\mathbb{H}$, i.e.
\begin{equation*}
K(z,z_0)=K(d(z,z_0)):=K(r).
\end{equation*}
For $n\in\mathbb{Z},s\in\mathbb{C}$ and $B\geq0$, we define two point-pair invariant functions
\begin{align}
	\label{eq:P,Q,snB}
	P_{s,B}^n(z,z_0)
	&=\tau^{-|n|/2}\sigma^{-s} F\left(s-B_n,s+B_n+|n|;1+|n|;\frac{1}{\tau}\right),\nonumber\\
	Q_{s,B}^n(z,z_0)
	&=-\frac{\Gamma(s-B_n)\Gamma(s+B_n+|n|)}{4\pi\Gamma(2s)}\tau^{|n|/2}\sigma^{-s} F\left(s+B_n,s-B_n-|n|;2s;\frac{1}{\sigma}\right),
\end{align}
where \(\tau=\frac{\cosh r+1}{\cosh r-1},\quad \sigma=\frac{\cosh r+1}{2}\),
\begin{equation*}
B_n=\left\{
      \begin{array}{ll}
        B, & n\in\mathbb{Z}_+, \\
        -B, & n\in-\mathbb{Z}_+,\\
        \pm B,& n=0.
      \end{array}
    \right.
\end{equation*}
and \(F(a,b;c;z)={}_2F_1(a,b;c;z)\) denotes the Gauss hypergeometric function. It is defined for \(|z|<1\) by the power series
	\[
	F(a,b;c;z) = \sum_{n=0}^{\infty} \frac{(a)_n (b)_n}{(c)_n}\cdot\frac{z^{n}}{n!},
	\]
	where \((q)_n = q(q+1)\cdots(q+n-1)\) denotes the usual Pochhammer symbol.

If $B=0$, then the functions $P_{s,B}^n(r),Q_{s,B}^n(r)$ are reduced to the associated Legendre functions $|n|!P_{s-1}^{-|n|}(\cosh r),\frac{(-1)^{n+1}}{2\pi}Q_{s-1}^{|n|}(\cosh r)$, respectively.
One can verify that the functions $P_{s,B}^n(r),Q_{s,B}^n(r)$ satisfy the symmetric properties
\begin{equation*}
\begin{split}
&P_{s,B}^n(r)=P_{s,-B-n}^n(r)=P_{s,-B}^{-n}(r)=P_{s,B+n}^{-n}(r),\\
&Q_{s,B}^n(r)=Q_{s,-B-n}^n(r)=Q_{s,-B}^{-n}(r)=Q_{s,B+n}^{-n}(r),
\end{split}
\end{equation*}
and as meromorphic functions with respect to $s$, they also satisfy the following relations
\begin{equation*}
\begin{split}
&P_{s,B}^n(r)=P_{1-s,B}^n(r),\\
&Q_{s,B}^n(r)=Q_{1-s,B}^n(r)+\frac{(-1)^{n+1}\sin(2s\pi)}{4|n|!\sin\pi(s+B)\sin\pi(s-B)}\frac{\Gamma(s+B_n+|n|)\Gamma(s-B_n)}{\Gamma(s+B_n)\Gamma(s-B_n-|n|)}P_{s,B}^n(r).
\end{split}
\end{equation*}
Moreover, one has the following asymptotic properties
\begin{equation}
	\label{eq:asymptotic properties of P,Q}
\begin{split}
\lim_{r\rightarrow 0}P_{s,B}^n(r)&=\left(\frac{r}{2}\right)^{|n|}\left\{1+r^2\left[\frac{(s-B_n)(s+B_n+n)}{4(1+|n|)}-\frac{s}{4}-\frac{|n|}{12}\right]+O(r^{4})\right\},\\
\lim_{r\rightarrow0}Q_{s,B}^0(r)&=\frac{1}{2\pi}\ln r+O(1),\\
\lim_{r\rightarrow0}Q_{s,B}^{\pm1}(r)&=-\frac{1}{2\pi}\left(\frac{1}{r}+\frac{1}{2}(s\pm B)(s\mp B-1)r\ln r\right)+O(r),
\end{split}
\end{equation}
and for $|n|>1$:
\[
\lim_{r\rightarrow0}Q_{s,B}^n(r)=-\frac{\Gamma(|n|)}{2^{2-|n|}\pi}\left(r^{-|n|}+\frac{1}{4}\left(\frac{(s+B_n)(s-B_n-|n|)}{1-|n|}-s+\frac{|n|}{3}\right)r^{2-|n|}+O(r^{4-|n|}\ln r)\right).\]

From the integral representation of the Gauss hypergeometric functions, one has
\begin{equation*}
\begin{split}
Q_{s,B}^n(r)&=-\frac{\Gamma(s+B_n+|n|)}{2^{1+s}\pi\Gamma(s+B_n)}(\cosh r-1)^{-|n|/2}(\cosh r+1)^{-B_n-\frac{|n|}{2}}\\
&\times\int_{-1}^1(1+t)^{s+B_n-1}(1-t)^{s-B_n-1}(\cosh r-t)^{B_n+|n|-s}dt\\
&=-\frac{\Gamma(s+B_n+|n|)}{4\pi\Gamma(s+B_n)}(\cosh r-1)^{-B_n-|n|}\int_r^\infty\frac{e^{(\frac{1}{2}-s)\rho}}{\sqrt{2(\cosh\rho-\cosh r)}}\\
&\times\sum_{\pm}e^{\pm(2B_n+|n|)\theta}\left(\sqrt{2}\sinh\frac{\rho}{2}\mp\sqrt{\cosh\rho-\cosh r}\right)^{2(B_n+|n|)}d\rho,\quad \Re s>B,
\end{split}
\end{equation*}
where the change of variables is performed as follows
\begin{equation*}
t=\frac{e^\theta-\tanh\frac{r}{2}}{e^\theta+\tanh\frac{r}{2}},\quad e^\rho=\cosh r+\sinh r\cosh\theta.
\end{equation*}

\begin{proposition}
	For $s \in \mathbb{C}$, $z, z' \in \mathbb{H}$ and $\displaystyle \frac{z' - z}{z' - \bar{z}} =\tanh\left(\frac{r}{2}\right) e^{i\theta}$, the resolvent kernel for $D_B$ is given by
	\[g_{s,B}(z, z') = \left( \frac{z - \bar{z}'}{z' - \bar{z}} \right)^B Q^0_{s,k}(z, z')\]
\end{proposition}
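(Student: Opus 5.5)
The plan is to check directly the two properties that characterize the resolvent kernel $g_{s,B}$ of $D_B$, namely the kernel of $(D_B+s(1-s))^{-1}$ up to the sign convention implicit in the normalization of $Q^0_{s,B}$. First, for fixed $z'$, $g_{s,B}(\cdot,z')$ must solve $(D_B+s(1-s))g=0$ on $\mathbb{H}\setminus\{z'\}$ and be square integrable at infinity when $\Re s>\tfrac12$. Second, it must have the logarithmic singularity $\frac{1}{2\pi}\ln r$ at $z=z'$, which produces exactly a Dirac mass (the Green's function normalization). Uniqueness then follows from the absence of $L^2$ solutions of the homogeneous equation for $\Re s>\tfrac12$ and $s(1-s)$ off the spectrum: the difference of two such kernels would be an $L^2$ eigenfunction, and self-adjointness excludes this. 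The statement then extends to all $s\in\mathbb{C}$ by meromorphic continuation, since both sides are meromorphic in $s$.

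\textbf{Step 1: gauge factor.} The factor $\Phi(z,z')=\bigl(\frac{z-\bar z'}{z'-\bar z}\bigr)^B$ is unimodular, since its base is $w/\bar w$ with $w=z-\bar z'$. I would compute the conjugated operator $\Phi^{-1}D_B\Phi$. Because $\partial_x\log\Phi$ and $\partial_y\log\Phi$ are explicit rational expressions in $z,\bar z,z'$, this reduces $D_B$ to $y^2\Delta$ plus first-order terms. When acting on point-pair invariant functions $f(r)$, these terms should collapse to an ODE in $r$. In the variable $u=\sigma^{-1}$ or $1/\tau$, I expect this to be the hypergeometric equation with parameters $(s+B,\,s-B;\,2s)$. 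Equivalently, in terms of $r$ it is the radial Maass operator
\[f''+\coth r\,f'-\frac{B^2\,\sinh^2(r/2)}{\cosh^2(r/2)}\cdot(\ldots)f+s(1-s)f=0.\]
The cleanest route is to use the identity $\frac{z'-z}{z'-\bar z}=\tanh(r/2)e^{i\theta}$ to rewrite everything in geodesic polar coordinates centred at $z'$. There $D_B$ becomes $\partial_r^2+\coth r\,\partial_r+\sinh^{-2}r\,\partial_\theta^2$ plus a magnetic term proportional to $B\tanh(r/2)\partial_\theta$ and a $B^2\tanh^2(r/2)$ potential. After conjugation by $\Phi$, the angular dependence decouples, and the $n=0$ mode reduces to the ODE satisfied by $Q^0_{s,B}$ by construction of \eqref{eq:P,Q,snB}.

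\textbf{Step 2: boundary behaviour.} The $\sigma^{-s}$ prefactor in $Q^0_{s,B}$ gives decay like $e^{-sr}$. Against the volume element $\sinh r\,dr$, this is square integrable when $\Re s>\tfrac12$. At $r\to0$, the asymptotic \eqref{eq:asymptotic properties of P,Q} gives $Q^0_{s,B}\sim\frac{1}{2\pi}\ln r$, and $\Phi\to1$ as $z\to z'$. Green's formula on the complement of a small geodesic disc then yields $(D_B+s(1-s))g=\delta_{z'}$ with the correct constant; the lower-order magnetic terms contribute nothing in the limit, since they are $O(r)$ times $O(r^{-1})$ integrated over a circle of length $O(r)$.

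\textbf{Main obstacle.} I expect the main obstacle to be the conjugation computation in Step 1: verifying precisely that $\Phi$ cancels the $\theta$-dependent magnetic cross terms, so that the radial reduction lands exactly on the hypergeometric equation with parameters $s\pm B$. My first attempt would be a computation at $z'=i$, justified by the transformation law of $D_B$ under $SL_2(\mathbb{R})$. Under $z\mapsto gz$, $D_B$ commutes with multiplication by the automorphy factor $\bigl(\frac{cz+d}{c\bar z+d}\bigr)^B$, and $\Phi$ transforms compatibly. This reduces the problem to the model computation with $z'=i$, where $\Phi=\bigl(\frac{z+i}{i-\bar z}\bigr)^B$ and one can pass to the disc coordinate $w=\frac{z-i}{z+i}$.
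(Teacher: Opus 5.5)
Your outline (gauge reduction to a radial hypergeometric ODE off the diagonal, $e^{-r\Re s}$ decay, the $\frac{1}{2\pi}\ln r$ singularity with Green's formula, then identification with the resolvent) is the same as the paper's. However, Step 1 sets out to verify an identity that is false for the stated factor. With $\Phi(z,z')=\bigl(\frac{z-\bar z'}{z'-\bar z}\bigr)^B$, the function $g_{s,B}(\cdot,z')$ is an eigenfunction of $D_{-B}$ in $z$. It is an eigenfunction of $D_B$ only in the second variable $z'$, which is exactly what the first sentence of the paper's proof says.

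Your own model computation exposes this. At $z'=i$ write $\Phi=e^{i\chi}$, so that $\chi_x=-\frac{2B(y+1)}{x^2+(y+1)^2}$ and $\chi_y=\frac{2Bx}{x^2+(y+1)^2}$. The first-order part of $\Phi^{-1}D_B(\Phi f)$ is then $2iy^2\bigl[(\chi_x-B/y)\partial_x f+\chi_y\partial_y f\bigr]$. Near $z=i$ the coefficient field is $\approx(-2B,0)$, so on $f=f(r)$ this term is $\approx -4iB\,f'(r)\cos\phi$, where $\phi$ is the Euclidean angle about $i$. The angular dependence therefore does not decouple.

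The factor that does reduce $D_B$ in the variable $z$ is the reciprocal $\bigl(\frac{z'-\bar z}{z-\bar z'}\bigr)^B$, which is the one appearing in the paper's Lemma taken from Fay. Indeed, with $u=\frac{z-i}{z+i}$ and $\theta=\arg u$ one has $By^{-1}dx=B(\cosh r-1)\,d\theta+2B\,d\arg(1-u)$ and $e^{2iB\arg(1-u)}=\bigl(\frac{i-\bar z}{z+i}\bigr)^B$. So you must either let $D_B$ act in $z'$, or replace $D_B$ by $D_{-B}$ throughout Steps 1--2. In the first option the resolvent of $D_B$ is $f\mapsto\int g_{s,B}(z,z')f(z)\,d\omega(z)$; the paper's later $T_s$, which integrates in $z'$, is inconsistent with its own first line. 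After this change your Steps 1 and 2 go through as written.

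The radial operator you land on is also slightly off. It is $f''+\coth r\,f'+\frac{B^2}{\cosh^2(r/2)}f$; the potential $-B^2\tanh^2(r/2)$ belongs to $D_B-B^2$. In the variable $\sigma=\cosh^2(r/2)$, the equation $f''+\coth r\,f'+\bigl(\frac{B^2}{\cosh^2(r/2)}+s(1-s)\bigr)f=0$ is solved by $\sigma^{-s}F(s+B,s-B;2s;\sigma^{-1})$, and this is how $Q^0_{s,B}$ enters.

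On the identification step your route genuinely differs from the paper's, and it is the better one. The paper asserts that $T_s$ is bounded on $L^2$ because the kernel decays like $e^{-r\Re s}$. Against $\sinh r\,dr$ that only gives Schur boundedness for $\Re s>1$; for $\frac12<\Re s\le 1$ one would need Kunze--Stein. Your argument avoids this. For $\varphi\in C_c^\infty$, the difference between $\int g_{s,B}(z,z')\varphi(z)\,d\omega(z)$ and $(D_B-s(s-1))^{-1}\varphi$ is an $L^2$ distributional solution of the homogeneous equation. By essential self-adjointness it lies in the domain, so it vanishes when $s(s-1)$ is off the spectrum. This uses only square integrability, which holds exactly for $\Re s>\frac12$.

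Finally, drop the claim that the identity extends to all $s\in\mathbb{C}$ because both sides are meromorphic. For $\Re s<\frac12$ the resolvent at $s(s-1)$ has kernel $g_{1-s,B}\neq g_{s,B}$, so the true resolvent, as a function of $s$, jumps across $\Re s=\frac12$. As in the paper, what you actually prove is the case $\Re s>\frac12$, $s\pm B\notin\{0,-1,-2,\dots\}$. Elsewhere $g_{s,B}$ is only a meromorphic continuation.
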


\begin{proof}
	Direct computation show that $g_{s,B}(z,z')$ is an eigenfunction of $D_B$ (resp.\ $D_{-B}$) in $z'$ (resp.\ $z$) with eigenvalue $s(s-1)$. Setting $n=0$ in \eqref{eq:P,Q,snB} gives $B_0=\pm B$ and we obtain
	\[Q^0_{s,B}(r)=-\frac{\Gamma(s+B)\,\Gamma(s-B)}{4\pi\,\Gamma(2s)}
		\Bigl(1-\tanh^2\tfrac{r}{2}\Bigr)^s
		F\Bigl(s+B,\,s-B;\,2s;\,1-\tanh^2\tfrac{r}{2}\Bigr).\]
	We note that
	\[
	e^{rs}\Bigl(1-\tanh^2\tfrac{r}{2}\Bigr)^s
	=e^{rs}\cdot\frac{4^{s}\,e^{-rs}}{\bigl(1+e^{-r}\bigr)^{2s}}
	=\frac{4^{s}}{\bigl(1+e^{-r}\bigr)^{2s}}
	\longrightarrow 4^{s}\qquad(r\to\infty).
	\]
	By the power-series definition of the Gauss hypergeometric function,
	\[
	F(a,b;c;z)=1+\frac{ab}{c}\,z+O(z^{2}),\qquad |z|<1,
	\]
	so that, with $z=1-\tanh^2\tfrac{r}{2}=O(e^{-r})\to0$,
	\[
	F\Bigl(s+B,\,s-B;\,2s;\,1-\tanh^2\tfrac{r}{2}\Bigr)
	=1+O\bigl(e^{-r}\bigr)\longrightarrow1.
	\]
	We deduce
	\begin{equation}
		\label{eq:Q-asymptotic}
		\lim_{r \to \infty} e^{rs} \left( \frac{z' - \bar{z}}{z - \bar{z}'} \right)^B g_{s,B}(z, z') = -4^{s-1} \frac{\Gamma(s+B)\,\Gamma(s-B)}{\pi\,\Gamma(2s)}.
	\end{equation}
In addition, it follows from \eqref{eq:asymptotic properties of P,Q} that $\displaystyle g_{s,B}(z, z') - \frac{1}{2\pi} \ln|z-z'|$ is continuous at $z=z'$. Next, we show that $g_{s,B}$ is the Green function of $D_B-s(s-1)$. Since $g_{s,B}$ is a classical eigenfunction of $D_B$ with eigenvalue $s(s-1)$ away from the diagonal, the distribution $(D_B-s(s-1))g_{s,B}$ is supported on the diagonal $\{z=z'\}$. To determine it, we write
	\[
	g_{s,B}(z,z')=\frac{1}{2\pi}\ln|z-z'|+h(z,z'),
	\]
	with $h$ continuous at $z=z'$. It is known that the principal part of the Hamiltonian $D_B$ is $y^2(\partial_x^2+\partial_y^2)$, and $\frac{1}{2\pi}\ln|z-z'|$ is the Green function of the Euclidean Laplacian $\partial_x^2+\partial_y^2$, whence
	\[
	y^2(\partial_x^2+\partial_y^2)\Bigl[\frac{1}{2\pi}\ln|z-z'|\Bigr]=y^2\delta(z-z').
	\]
	The remaining terms contribute no atomic part at $z=z'$. Indeed, integrating $(D_B-s(s-1))g_{s,B}$ against a test function $\varphi$ over $\mathbb{H}\setminus B_\varepsilon(z')$ and integrating by parts, the boundary term produced by the principal part acting on $\frac{1}{2\pi}\ln|z-z'|$ tends to $y^2(z')\varphi(z')$ as $\varepsilon\to0$, whereas the first-order term $-2iBy\,\partial_x\bigl[\frac{1}{2\pi}\ln|z-z'|\bigr]=O\bigl(|z-z'|^{-1}\bigr)$, the zeroth-order term $-s(s-1)\frac{1}{2\pi}\ln|z-z'|=O\bigl(\ln|z-z'|\bigr)$, and all terms involving the continuous remainder $h$ give boundary contributions that vanish in the limit. Hence, one has 
	\[(D_B-s(s-1))g_{s,B}(z,z')=y^2\delta(z-z').\]
	It remains to identify $g_{s,B}$ with the resolvent. The conditions $\mathrm{Re}\,s > \frac{1}{2}$ and $s \pm B \neq 0, -1, -2, \dots$ guarantee that the factor $\Gamma(s+B)\Gamma(s-B)/\Gamma(2s)$ in $Q^0_{s,B}$ is finite, so that $g_{s,B}$ is a well-defined function of $(z,z')$; by the asymptotic behavior \eqref{eq:Q-asymptotic}, it behaves like $e^{-r\,\mathrm{Re}\,s}$ as $r\to\infty$, and near the diagonal (i.e. as \(r\rightarrow0\)), it possesses only the logarithmic singularity as above. Consequently, the integral operator
	\[
	T_s f(z)=\int_{\mathbb{H}} g_{s,B}(z,z')\,f(z')\,d\omega(z')
	\]
	is bounded on $L^2(\mathbb{H},d\omega)$. Since, for $\mathrm{Re}\,s > \frac{1}{2}$, the point $s(s-1)$ lies outside the spectrum of $D_B$ (the continuous spectrum being $\{s(s-1):\mathrm{Re}\,s=\frac{1}{2}\}=(-\infty,-\frac14]$), $T_s$ defines the unique bounded inverse of $D_B-s(s-1)$. Therefore, under the conditions of $\mathrm{Re}\,s > \frac{1}{2}$ and $s \pm B \neq 0, -1, -2, \dots$, $g_{s,B}(z, z')$ is the Green function (resolvent kernel) of $D_B$.
\end{proof}

Note that $g_{s,B}(z, z')$ is a well-defined meromorphic function of $s$ away from the line $\mathrm{Re}\,s = \frac{1}{2}$. Due to $H_B=B^2-D_B$, we can apply Stone's formula to analyze the spectrum of $H_B$.

\begin{itemize}
	\item \textbf{Point spectrum:} The only poles of $g_{s,B}$ in $\mathrm{Re}\,s > \frac{1}{2}$, corresponding to points in the discrete spectrum, can occur at $s = |B|-l$, $l \in \mathbb{Z}$ and $0 \leqq l < |B|-\frac{1}{2}$. Indeed, for \(|B| > \frac{1}{2}\),
	\[
	E_n = |B| + n(2|B| - n - 1), \quad n = 0, 1, \dots, \lceil |B| - \tfrac{1}{2} \rceil - 1,
	\]
	are eigenvalues of \(H_B\) of infinite multiplicity, all strictly below the continuous spectrum threshold \(B^2 + 1/4\).
	
	If \(|B| \le \frac{1}{2}\), the point spectrum is empty.
	\item \textbf{Continuous spectrum:} The continuous spectrum of \(H_B\) is
	\[
	\bigl[ B^2 + \tfrac{1}{4}, \infty \bigr).
	\]
\end{itemize}

The condition \(|B| > \frac{1}{2}\), which guarantees the existence of discrete eigenvalues, physically means that the magnetic field must be sufficiently strong to trap the particle in a bound orbit. If \(|B|\leq\frac{1}{2}\), the motion remains unbounded and the particle escapes to infinity. The eigenfunctions corresponding to the eigenvalues \(\{E_n\}\) below the continuous spectrum are called bound states, reflecting the fact that a particle in such a state cannot leave the system without additional energy.

\begin{figure}[H]
	\centering
	\begin{tikzpicture}[>=Stealth]
		
		\fill[green!6] (0.5,-4.5) rectangle (4.5,4.5);
		\node[font=\scriptsize\itshape, opacity=0.6]
		at (3.5,-4.0) {$\mathrm{Re}(s)\geq\frac{1}{2}$};
		
		\draw[->] (-3.2,0) -- (5.0,0)
		node[right, font=\normalsize]{$\mathrm{Re}(s)$};
		\draw[->] (0,-4.5) -- (0,4.8)
		node[above, font=\normalsize]{$\mathrm{Im}(s)=\nu$};
		
		\foreach \x in {-3,-2,-1,1,2,3,4}{
			\draw[gray!40] (\x,0.07) -- (\x,-0.07);
			\node[below, font=\tiny, gray!55] at (\x,-0.1) {$\x$};
		}
		\foreach \y in {-4,-3,-2,-1,1,2,3,4}{
			\draw[gray!40] (0.07,\y) -- (-0.07,\y);
			\node[left, font=\tiny, gray!55] at (-0.1,\y) {$\y$};
		}
		
		\draw[blue!75!black,dashed] (0.5,-4.5) -- (0.5,4.5);
		\node[blue!75!black, font=\scriptsize\itshape] at (0.5,-5.0) {$s=\frac{1}{2}$};
		%
		\node[circle, fill=blue, inner sep=1.5pt, label=above:{$s_0$}] at (3,0) {};
		\node[circle, fill=blue, inner sep=1.5pt, label=above:{$s_1$}] at (2,0) {};
		\node[circle, fill=blue, inner sep=1.5pt, label=above:{$s_1$}] at (1,0) {};

	\end{tikzpicture}
	\caption{Spectrum of $H_{B}$ on the $s$-plane\quad$(B=3)$}
	\label{fig:spectrum}
\end{figure}

We denote the continuous spectrum energy by \(E(s) = s(1-s) + B^2\) for \(\Re(s)\geqslant \frac{1}{2}\). Set \(s = \tfrac{1}{2} + i\nu\) with \(\nu \in \mathbb{R}\), one has
\begin{equation}
	\label{eq:spectral nu}
	E(\nu) = \tfrac{1}{4} + B^2 + \nu^2 \in \bigl[\tfrac{1}{4}+B^2,\infty\bigr),\quad \nu\in\mathbb{R}.
\end{equation}
Geometrically, the entire continuous spectrum lies on the critical line \(\Re(s) = \tfrac{1}{2}\), extending vertically in both directions. The point \(\nu = 0\) (i.e., \(s = \tfrac{1}{2}\)) represents the spectral threshold \(E(0)= \tfrac{1}{4} + B^2\). For \(|B| > \tfrac{1}{2}\), there are finitely many discrete eigenvalues
\[
E_n = \tfrac{1}{4} + B^2 - \bigl(n + \tfrac{1}{2} - |B|\bigr)^2, \qquad 0 \le n < |B| - \tfrac{1}{2},
\]
all lying below the threshold. On the complex \(s\)-plane, these eigenvalues correspond to the real points on the physical half‑plane: \(s_n = |B| - n \in \bigl(\tfrac{1}{2},|B|\bigr]\) (see Figure~\ref{fig:spectrum}).

\bigskip

Using the point-pair invariance of $P_{s,B}^n(r)$ (or $Q_{s,B}^n(r)$), one can obtain a decomposition for the $L^2(\mathbb{H})$-function in terms of the spectral properties of $D_B$. Such a spectral decomposition is an analog to the Fock-Mehler transform for the free Laplacian \(y^2\Big(\frac{\partial^2}{\partial x^2}+\frac{\partial^2}{\partial y^2}\Big)\) on $\mathbb{H}$.

\begin{lemma}[{\cite[Theorem 1.4]{Fay77}}]
	Let \(B>0\). For any fixed $z_0\in\mathbb{H}$, the functions
	\begin{equation}\label{discrete:eigenv}
	\left(\frac{z_0-\bar{z}}{z-\bar{z}_0}\right)^BP_{B-\ell,B}^n(z,z_0)e^{in\theta(z,z_0)},\quad \ell,n\in\mathbb{Z},0\leq\ell<B-\frac{1}{2},n\geq-\ell
	\end{equation}
	are eigenfunctions of the operator $-D_B$ with the eigenvalues $(B-\ell)(1-B+\ell)$ and they are orthogonal under the inner product $(\cdot,\cdot)$ of $L^2(\mathbb{H})$. If the eigenfunctions in \eqref{discrete:eigenv} are relabelled, after $L^2$-normalization, as $\{e_n\}_{n\in\mathbb{Z}_+}$, then it holds for any $f\in L^2(\mathbb{H})$
	\begin{equation}
     \label{eq:spectral decomposition}
	\begin{split}
	f(z)=&\sum_{n=1}^\infty\langle f,e_n\rangle e_n(z)+\frac{1}{8\pi i}\int_{\Re s=1/2}\frac{(s-\frac{1}{2})\sin(2s\pi)}{\sin\pi(s-B)\sin\pi(s+B)}\\
	&\qquad\qquad \times\int_{\mathbb{H}}\left(\frac{z'-\bar{z}}{z-\bar{z'}}\right)^Bf(z')P^0_{s,B}(z,z')d\omega(z')ds.
	\end{split}
	\end{equation}
\end{lemma}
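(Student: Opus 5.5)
The plan is to derive the expansion \eqref{eq:spectral decomposition} from the resolvent kernel $g_{s,B}$ of the preceding Proposition through Stone's formula / contour deformation, and to verify the discrete part separately. The continuous part comes from the jump of the resolvent across $\Re s=\tfrac12$, and the discrete part from residues of the resolvent at its poles in $\Re s>\tfrac12$.

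\emph{Step 1 (eigenfunctions and orthogonality).} Let $\ell\ge 0$, $s_\ell=B-\ell$, and write
\[
\phi_{\ell,n}(z)=\Bigl(\tfrac{z_0-\bar z}{z-\bar z_0}\Bigr)^B P^n_{s_\ell,B}(z,z_0)e^{in\theta}.
\]
Using the geodesic polar coordinates $(r,\theta)$ centred at $z_0$ given by $\frac{z-z_0}{z-\bar z_0}=\tanh\frac r2\, e^{i\theta}$, conjugation by the gauge factor turns $D_B$ into a radial/angular operator. On the $n$-th Fourier mode this operator is exactly the hypergeometric ODE solved by $P^n_{s,B}$. This gives $-D_B\phi_{\ell,n}=s_\ell(1-s_\ell)\phi_{\ell,n}$ away from $z_0$. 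The small-$r$ asymptotics \eqref{eq:asymptotic properties of P,Q} show regularity at $z_0$.

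For $L^2$-membership I would use the fact that at $s=B-\ell$ the first parameter $s-B_n=-\ell$ is a nonpositive integer, at least for $n\ge 0$. Then $F$ is a polynomial in $1/\tau$, so $\phi_{\ell,n}\sim \sigma^{-(B-\ell)}\sim e^{-(B-\ell)r}$, which is square integrable against $\sinh r\,dr$ exactly when $B-\ell>\tfrac12$. For $-\ell\le n<0$ I would invoke the symmetry $P^n_{s,B}=P^{-n}_{s,B+n}$ to reach the same conclusion; the condition $n\ge-\ell$ is exactly what keeps the polynomial case. Orthogonality follows in two ways: different $n$ are orthogonal by the $\theta$-integration, and different $\ell$ are orthogonal by self-adjointness, since the eigenvalues are distinct.

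\emph{Step 2 (Stone's formula).} For $f\in C_c(\mathbb H)$ I would write
\[
f=-\frac{1}{2\pi i}\oint (D_B-\lambda)^{-1}f\,d\lambda
\]
over a large contour enclosing the spectrum, and then change variables $\lambda=s(s-1)$, so that $d\lambda=(2s-1)\,ds$. The spectrum $(-\infty,-\tfrac14]$ pulls back to the line $\Re s=\tfrac12$, where the resolvent has boundary values $g_{s,B}$ and $g_{1-s,B}$ from the two sides. The continuous contribution is therefore
\[
\frac{1}{2\pi i}\int_{\Re s=1/2}(2s-1)\bigl[g_{s,B}-g_{1-s,B}\bigr]\,ds .
\]
The reflection identity for $Q$ (with $n=0$) gives
\[
Q^0_{s,B}-Q^0_{1-s,B}=-\frac{\sin 2\pi s}{4\sin\pi(s+B)\sin\pi(s-B)}\,P^0_{s,B}.
\]
Inserting the gauge factor from the Proposition and collecting constants, which turns $(2s-1)\cdot\tfrac14\cdot\tfrac1{2\pi i}$ into $\frac{s-1/2}{8\pi i}\cdot 2$ once the two halves of the line are folded using $P_s=P_{1-s}$, should reproduce exactly the kernel in \eqref{eq:spectral decomposition}.

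The remaining contributions are the residues of $g_{s,B}$ at the poles $s=B-\ell$ of $\Gamma(s-B)$ with $\Re s>\tfrac12$. At such a pole the function $Q^0_{s,B}$ degenerates into a multiple of $P^0$-type functions. The residue operator is the orthogonal projector onto the eigenspace, and that projector is expressed through the $\phi_{\ell,n}$ by the addition theorem: summing $\phi_{\ell,n}(z)\overline{\phi_{\ell,n}(z')}$ over $n$ recovers the point-pair kernel. Extension from $C_c$ to $L^2$ is then by density, using the $L^2$-boundedness of the spectral projections.

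\emph{Main obstacle.} I expect the hardest part to be Step 2's residue computation: showing that the residues at $s=B-\ell$ are exactly the projections onto the span of $\{\phi_{\ell,n}\}_{n\ge-\ell}$. This requires an addition/summation formula in $n$ for the $P^n$, with precise normalizations. My first attempt would be to compute the residue kernel's Fourier coefficients in $\theta$ around $z_0$ directly, and match them with $\|\phi_{\ell,n}\|^{-2}\phi_{\ell,n}\overline{\phi_{\ell,n}}$. A secondary technical point is justifying the contour deformation through the decay of $g_{s,B}$ as $|\Im s|\to\infty$, which I would obtain from \eqref{eq:Q-asymptotic} together with Stirling's formula.
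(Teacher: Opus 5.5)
The paper gives no proof of this lemma; it is quoted from Fay's Theorem~1.4, so there is no argument of the paper's to compare yours against. Your route is the standard one: the explicit resolvent kernel and Stone's formula, the continuous part from the jump $g_{s,B}-g_{1-s,B}$ via the $Q$-reflection identity, and the discrete part from the poles $s_\ell=B-\ell$. Your Step~1 is fine.

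\textbf{Completeness of the single-centre family.} The genuine gap is the step you flag yourself, and it is the real content of the discrete half of the lemma. You must show that the single-centre family $\{\phi_{\ell,n}\}_{n\ge-\ell}$ spans the whole eigenspace $E_\ell=\ker\bigl(D_B-s_\ell(s_\ell-1)\bigr)$. You route this through an addition theorem for the $P^n$ that you neither state precisely nor prove, so the sum $\sum\langle f,e_n\rangle e_n$ is not yet justified.

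The missing idea is that no residue computation is needed. Since $s_\ell(s_\ell-1)>-\frac14$ is an isolated eigenvalue of the self-adjoint $D_B$, you automatically have $\operatorname{Res}_{s=s_\ell}g_{s,B}=-(2s_\ell-1)^{-1}\Pi_\ell$, where $\Pi_\ell$ is the orthogonal projector onto $E_\ell$. What remains is completeness, and separation of variables gives it:
\begin{enumerate}
\item Take $u\in E_\ell$; it is smooth by elliptic regularity.
\item Divide by the unimodular gauge factor and expand in $e^{in\theta}$ about $z_0$. Each coefficient $u_n(r)$ solves the radial ODE and is bounded at $r=0$. Hence $u_n=c_nP^n_{s_\ell,B}$, since the other solution behaves like $r^{-|n|}$ or $\ln r$.
\item Parseval, $\|u\|^2=2\pi\sum_n\int_0^\infty|u_n|^2\sinh r\,dr$, puts every mode in $L^2$.
\item For $n<-\ell$, neither $a=2B-\ell$ nor $b=|n|-\ell$ is a nonpositive integer, and $c-a-b=1-2s_\ell<0$. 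So $P^n_{s_\ell,B}\sim C\sigma^{-(1-s_\ell)}$ with $C=\Gamma(1+|n|)\Gamma(2s_\ell-1)/\bigl(\Gamma(2B-\ell)\Gamma(|n|-\ell)\bigr)\neq0$. This is not square integrable because $1-s_\ell<\frac12$, hence $c_n=0$.
\end{enumerate}
Thus $E_\ell=\overline{\operatorname{span}}\{\phi_{\ell,n}\}_{n\ge-\ell}$, and the addition formula you were after comes out as a corollary rather than an input.

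\textbf{Repairs to Step~2.}
\begin{itemize}
\item There is no bounded contour enclosing $(-\infty,-\frac14]$. Apply Stone's formula on $(-R,-\frac14)$ instead; this only sees $s=\frac12+i\nu$ with $\nu>0$.
\item The function $h(s)=(2s-1)\bigl(g_{s,B}-g_{1-s,B}\bigr)$ satisfies $h(1-s)=h(s)$. So the continuous part is $-\frac{1}{4\pi i}\int_{\Re s=1/2}h(s)\,ds$ with upward orientation, not $\frac{1}{2\pi i}\int h$ as in your display. This is the factor $2$ your ``folding'' is meant to absorb. With the reflection identity it gives exactly $\frac{1}{8\pi i}$.
\item The kernel of $(D_B-s(s-1))^{-1}$ acting in $z$ carries the factor $\bigl(\frac{z'-\bar z}{z-\bar z'}\bigr)^B$. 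That is the Proposition's $g_{s,B}$ with its arguments swapped, which is what matches the lemma.
\item The limit $R\to\infty$ cannot come from decay of $g_{s,B}$ in $|\Im s|$. For fixed $r>0$, $|P^0_{\frac12+i\nu,B}(r)|$ decays only like $|\nu|^{-1/2}$ while the spectral weight grows like $|\nu|$, so the $s$-integral of the kernel diverges pointwise. Pair with $f\in C_c^\infty$ first and use $\Pi_{(-R,-1/4)}f\to\Pi_c f$ in $L^2$.
\item When $B\in\frac12+\mathbb{Z}$, $g_{s,B}$ has a pole at the threshold $s=\frac12$, on the line itself. You should note that it carries no eigenvalue: at $s=\frac12$ every radial solution decays at best like $\sigma^{-1/2}$, which is not in $L^2$. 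You should also note that the weight $\frac{(s-1/2)\sin 2\pi s}{\sin\pi(s-B)\sin\pi(s+B)}$ stays bounded there.
\end{itemize}
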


Now we are ready to obtain a proper representation for the kernel of the Schr\"odinger propagator $e^{-itH_B}$. Note that $e^{-itH_B}=e^{-itB^2+itD_B}$. Applying $e^{-itH_B}\Pi_c$ to both sides of \eqref{eq:spectral decomposition} gives
\begin{equation}
	\begin{split}
	K_t^S(z,z'):&= \frac{e^{-iB\vartheta}}{8\pi i} \int\limits_{\operatorname{Re} s = \frac{1}{2}} e^{its(s-1)} \, P^0_{s,B}(r) \, \frac{\sin 2\pi s}{\sin \pi(s+B) \sin \pi(s-B)} \left(s - \frac{1}{2}\right) \mathrm{d}s \;,
\end{split}
\end{equation}
For $0<B<1/2$, the integral representation of $P_{s,B}^n(r)$ is given by
\begin{equation*}
\begin{split}
P_{s,B}^n(r)&=\frac{|n|!\Gamma(s+B_n)}{2\pi\Gamma(s+B_n+|n|)}\int_0^{2\pi}
e^{in\theta}\left(\frac{1-e^{i\theta}\tanh\frac{r}{2}}{1-e^{-i\theta}\tanh\frac{r}{2}}\right)^B\left(\frac{1-\tanh^2\frac{r}{2}}{\big|e^{i\theta}-\tanh\frac{r}{2}\big|^2}\right)^sd\theta\\
&=\frac{1}{\pi}\int_{-r}^r\Re\left[e^{in\theta}\left(\frac{1-e^{i\theta}\tanh\frac{r}{2}}{1-e^{-i\theta}\tanh\frac{r}{2}}\right)^B\right]\frac{e^{-(s-\frac{1}{2})\rho}d\rho}{\sqrt{2(\cosh r-\cosh\rho)}},
\end{split}
\end{equation*}
where the change of variables is performed as follows
\begin{equation*}
e^\rho=\cosh r-\sinh r\cos\theta.
\end{equation*}
In particular, it holds for $B<\Re s<1-B$ with $0\leq B<\frac{1}{2}$
\begin{equation}
	\label{eq:P^0_s,B}
	\begin{split}
	P^0_{s,B}(r)&=\frac{-2\sin\pi(s+B)\sin\pi(s-B)}{\pi(\cosh r-1)^B\sin(2\pi s)}\int_r^\infty\frac{\sinh(s-\frac{1}{2})\rho}{\sqrt{2(\cosh\rho-\cosh r)}}\\
	&\times\sum_{\pm}e^{\pm2B\theta}(\sqrt{2}\sinh\frac{\rho}{2}\mp\sqrt{\cosh\rho-\cosh r})^{2B}d\rho.
	\end{split}
\end{equation}
In this regime, we deduce from \eqref{eq:P^0_s,B} that
\begin{equation}\label{exp:Sch}
    K_t^S(z,z')=\frac{e^{-iB\vartheta}e^{\frac{t}{4i}}}{2^{5/2}(\pi it)^{3/2}}\int_r^\infty\frac{\rho e^{-\frac{\rho^2}{4it}}}{\sqrt{\cosh\rho-\cosh r}}T_{2B}\left(\frac{\cosh\frac{\rho}{2}}{\cosh\frac{r}{2}}\right)d\rho,
\end{equation}
where $T_\nu$ denotes the generalized Chebyshev polynomial of order $\nu$ and in particular
\begin{equation}\label{def:T+}
T_n(x)=\frac{1}{2}\sum_\pm(x\pm\sqrt{x^2-1})^n,\quad n\in\mathbb{N},\quad x\geq1.
\end{equation}

It follows that
	\[
	e^{-itH_B}(z,z')
	= \frac{c}{|t|^{3/2}}\;e^{-it(B^2+\frac14)}
	\int_{r}^{\infty}\rho\,e^{i\frac{\rho^2}{4t}}\,
	\bigl[\Psi(\rho,B)+\Psi(\rho,-B)\bigr]\,e^{-iB\vartheta}\,d\rho
	\]
	with \(\Psi(\rho,B)\) is given by \eqref{eq:Psi}.

	\section{Dispersive and Strichartz estimates}
	\label{sec:proof of cor}
	
	In this section, we prove the dispersive and Strichartz estimates for the linear Schr\"{o}dinger equation \eqref{eq:prob}. To this end, we write
	\begin{equation}
		\label{eq:L_t}
		L_t(r):=\int_{r}^{\infty}\rho\,e^{i\frac{\rho^{2}}{4t}}\bigl[\Psi(\rho,B)+\Psi(\rho,-B)\bigr]\,d\rho .
	\end{equation}
	From the integral representation established in Theorem~\ref{thm:Main}, we have
	\[
	\bigl|e^{-itH_B}u_0(z)\bigr|
	\lesssim \frac{1}{|t|^{3/2}}\int_{\mathbb{H}}|L_t\bigl(r(z,z')\bigr)|\,|u_0(z')|\,d\omega(z')
	=\bigl(|u_0|*|L_t|\bigr)(z).
	\]
	By the argument of Anker-Pierfelice~\cite{anker2009}, Corollary~\ref{cor:decay} and Corollary~\ref{cor:Strichartz} follow from the proposition below.
	
	\begin{proposition}
		\label{pro:upper bound}
		Assume $0<B<\frac{1}{2}$. There exists a constant $C_B>0$ such that, for every $t\in\mathbb{R}^*$ and $r\geqslant 0$,
		\begin{equation}
			\label{eq:pointwise estimate}
			|L_t(r)|\leqslant C_B \min\!\Bigl\{1+r,\;\sqrt{|t|}\,(1+r)^{1/2}\Bigr\}\, e^{-r/2}.
		\end{equation}
	\end{proposition}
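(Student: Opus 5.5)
The plan is to treat the two bounds in \eqref{eq:pointwise estimate} separately. The first comes from absolute integrability. The second comes from an oscillatory (van der Corput type) estimate after the substitution $v=\rho^2-r^2$, which turns the Gaussian phase into a linear one. First I would rewrite the amplitude. With $x=\cosh(\rho/2)/\cosh(r/2)\ge 1$ one has $\sqrt{x^2-1}=\sqrt{\cosh\rho-\cosh r}/(\sqrt2\cosh(r/2))$. Hence
\[
\Psi(\rho,B)+\Psi(\rho,-B)=\frac{2\,T_{2B}(x)}{\sqrt{2(\cosh\rho-\cosh r)}},\qquad T_{2B}(x)=\cosh\bigl(2B\,\mathrm{arccosh}\,x\bigr).
\]
This is consistent with \eqref{exp:Sch}. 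The point is that $T_{2B}$ is a smooth (indeed analytic) function of $x$ on $[1,\infty)$, because $\cosh(2B\,\cdot)$ is even. The only singularity of the amplitude at $\rho=r$ is therefore the factor $(\cosh\rho-\cosh r)^{-1/2}$. For large $\rho$ we have $T_{2B}(x)\lesssim x^{2B}\sim e^{B(\rho-r)}$.

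\emph{First bound.} I would estimate $|L_t(r)|\le\int_r^\infty\rho\,|\Psi(\rho,B)+\Psi(\rho,-B)|\,d\rho$ directly. On $[r,r+1]$, use $\cosh\rho-\cosh r\gtrsim \sinh(\max\{r,\rho\})\,(\rho-r)\gtrsim(\rho^2-r^2)\,e^{r}/(1+r)$ uniformly. This gives a contribution $\lesssim (1+r)e^{-r/2}$; for $r\le1$ one checks the bound $\int_r^{r+1}\rho(\rho^2-r^2)^{-1/2}d\rho\lesssim1$ directly. On $[r+1,\infty)$ the integrand is $\lesssim \rho\, e^{-\rho/2}e^{B(\rho-r)}$. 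Since $B<\tfrac12$, this integrates to $\lesssim(\tfrac12-B)^{-2}(1+r)e^{-r/2}$, which also explains the constant $C_B\sim(1-2B)^{-2}$.

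\emph{Second bound.} Substitute $v=\rho^2-r^2$, so $\rho\,d\rho=dv/2$. Then
\[
L_t(r)=\tfrac12 e^{i r^2/4t}\int_0^\infty e^{iv/4t}\,a_r(v)\,dv,\qquad a_r(v)=\frac{2T_{2B}(x(v))}{\sqrt{2(\cosh\rho(v)-\cosh r)}}.
\]
Write $a_r(v)=v^{-1/2}b_r(v)$, where $b_r(v)=\sqrt{v/(2(\cosh\rho-\cosh r))}\,2T_{2B}(x)$ is smooth on $[0,\infty)$. I would then use the elementary lemma
\[
\Bigl|\int_0^\infty e^{i\lambda v}v^{-1/2}b(v)\,dv\Bigr|\lesssim\lambda^{-1/2}\Bigl(\|b\|_{L^\infty}+\int_0^\infty|b'(v)|\,dv\Bigr).
\]
It is proved by splitting at $v=\lambda^{-1}$. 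The piece $[0,\lambda^{-1}]$ is estimated trivially. On $[\lambda^{-1},\infty)$ one integrates by parts once, using $\int_{\lambda^{-1}}^{V}e^{i\lambda v}v^{-1/2}dv=O(\lambda^{-1/2})$ uniformly in $V$. Taking $\lambda=1/(4|t|)$ gives the factor $\sqrt{|t|}$. It remains to show
\[
\|b_r\|_\infty+\|b_r'\|_{L^1}\lesssim(1+r)^{1/2}e^{-r/2},
\]
and this step is the main obstacle. Near $v=0$, one has $\cosh\rho-\cosh r\approx \frac{\sinh r}{2r}v$, so $b_r(0)\sim (r/\sinh r)^{1/2}\sim(1+r)^{1/2}e^{-r/2}$; this is exactly where the factor $(1+r)^{1/2}$ comes from. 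For large $v$, $b_r$ behaves like $v^{1/2}\,e^{-\rho/2}e^{B(\rho-r)}$ with $\rho=\sqrt{r^2+v}$, which is bounded by the same quantity.

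For the $L^1$ norm of $b_r'$, I would show that $b_r$ is piecewise monotone with $O(1)$ pieces, so that $\|b_r'\|_{L^1}\lesssim\|b_r\|_\infty$. Alternatively, I would bound $|b_r'|$ directly, splitting into $\rho-r\le1$ and $\rho-r\ge1$. In the first region, use Taylor control of $v/(\cosh\rho-\cosh r)$ as a smooth function of $v$ and the smoothness of $T_{2B}$ near $x=1$. In the second region, use $\partial_v=(2\rho)^{-1}\partial_\rho$ together with the exponential decay $e^{-(1/2-B)(\rho-r)}$, which makes the derivative integrable with the same $(1-2B)^{-2}$-type constant. Since both bounds hold for every $t\ne0$ and $r\ge0$, their minimum gives \eqref{eq:pointwise estimate}.
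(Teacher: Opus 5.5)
\textbf{Verdict: correct, with a genuinely different route for the second bound; one step is only sketched, and it can be closed as follows.}

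\textbf{Comparison with the paper.} Your first bound is essentially the paper's large-time estimate: absolute integration, the substitution $u=\rho-r$, and a constant of size $\delta^{-2}$ with $\delta=\frac12-B$. The only difference is that you state it for all $t$.

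The second bound is where you diverge. The paper works in $\rho$ and splits at $\rho=\sqrt{r^2+t}$, i.e.\ at $v=t$, the same scale as your cut $v=4|t|$. It bounds the near piece trivially and integrates by parts via $\rho e^{i\rho^2/4t}=-2it\,\partial_\rho e^{i\rho^2/4t}$. This forces three things:
\begin{enumerate}
\item differentiating the whole kernel $K$ through $X$, where $\cosh\frac\rho2=\cosh\frac r2\cosh X$;
\item integrating the resulting $(\rho^2-r^2)^{-3/2}$ singularity of $K'$;
\item restricting to $t\le 1+r$, so that the tail contribution $t\,e^{-r/2}$ is at most $\sqrt t\,(1+r)^{1/2}e^{-r/2}$.
\end{enumerate}
Your substitution $v=\rho^2-r^2$ instead linearizes the phase and puts the singular weight $v^{-1/2}$ into the oscillatory primitive. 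Only the variation of the regular part $b_r=v^{1/2}K$ then enters, and the estimate is uniform in $t$.

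This is also more robust near the diagonal. In the paper's version one must keep the factor $\sinh\rho/\cosh^2\frac r2\sim\rho/(1+r)$ in $K'$. Without it, $\int_{\sqrt{r^2+t}}^{r+1}(\rho^2-r^2)^{-3/2}\,d\rho$ is of order $t^{-1}$ rather than $t^{-1/2}$ when $r\lesssim\sqrt t\ll1$.

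\textbf{The sketched step.} Everything rests on $\|b_r'\|_{L^1}\lesssim_B(1+r)^{1/2}e^{-r/2}$ uniformly in $r$, which you only sketch. Taylor control at $v=0$ is not enough, because the region $\{\rho\le r+1\}$ is $v\in[0,2r+1]$, an interval of length about $r$.

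\emph{Region $\rho\le r+1$.} Put $F(w)=\cosh\sqrt w$. Then $h(v):=v/(\cosh\rho-\cosh r)=\bigl(\int_0^1F'(r^2+sv)\,ds\bigr)^{-1}$, and $F'(w)=\sinh\sqrt w/(2\sqrt w)$ is increasing. Hence $\sqrt h$ is decreasing, with total variation at most $\sqrt{h(0)}=\sqrt{2r/\sinh r}\sim(1+r)^{1/2}e^{-r/2}$. Now $b_r=\sqrt{2h}\,\cosh(2BX)$, and $\cosh(2BX)$ is increasing and bounded by $e^{1/2}$ on this region. The inequality $\mathrm{TV}(fg)\le\|f\|_\infty\mathrm{TV}(g)+\|g\|_\infty\mathrm{TV}(f)$ therefore handles this region without any piecewise-monotonicity claim.

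\emph{Region $\rho\ge r+1$.} Your derivative bound works, provided you write $v^{1/2}=\sqrt{u(u+2r)}$ with $u=\rho-r$, rather than $v^{1/2}\le\rho$; the latter loses a factor $\sqrt{1+r}$. With this, together with $|K|+|K'|\lesssim e^{-r/2}e^{-\delta u}$, you get $\int|b_r'|\,dv\lesssim(\delta^{-2}+\sqrt{1+r}\,\delta^{-3/2})e^{-r/2}$. The same bookkeeping gives $\|b_r\|_\infty\lesssim_B(1+r)^{1/2}e^{-r/2}$.
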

	
	By the time symmetry, it is sufficient to consider $t>0$. We will prove the following two bounds:
	\begin{enumerate}
		\item (Large‑time estimate) $|L_t(r)|\leqslant C_B (1+r)\, e^{-r/2}$ for $t\geqslant 1+r$;
		\item (Small-time estimate) $|L_t(r)|\leqslant C_B \sqrt{t}\,(1+r)^{1/2} e^{-r/2}$ for $0<t\leqslant 1+r$.
	\end{enumerate}
	The following asymptotics will be frequently used in the proof.
	
	\begin{lemma}
		\label{lem:hyperbolic-estimates}
		For $\rho>r>0$,
		\begin{equation}
			\label{eq:hyperbolic function1}
			\sinh \rho\;\sim\; \frac{\rho}{1+\rho}\,e^{\rho},
		\end{equation}
		and
		\begin{equation}
			\label{eq:hyperbolic function2}
			\cosh \rho-\cosh r
			=2\sinh\frac{\rho-r}{2}\sinh\frac{\rho+r}{2}
			\;\sim\; \frac{\rho-r}{1+\rho-r}\,\frac{\rho}{1+\rho}\,e^{\rho}.
		\end{equation}
		In particular,
		\begin{equation}
			\label{eq:hyperbolic function3}
			\cosh \rho-\cosh r\;\sim\;
			\begin{cases}
				\displaystyle\frac{\rho^{2}-r^{2}}{1+r}\,e^{r}, & r\leqslant \rho\leqslant r+1,\\[10pt]
				e^{\rho}, & \rho\geqslant r+1 .
			\end{cases}
		\end{equation}
	\end{lemma}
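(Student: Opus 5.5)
The plan is to reduce everything to elementary facts about $\sinh$ and the exponential.

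\textbf{First asymptotic.} For \eqref{eq:hyperbolic function1} I split into $\rho\le 1$ and $\rho\ge1$. For $0<\rho\le 1$ I use the Taylor expansion: $\sinh\rho=\rho(1+O(\rho^2))$, so $\sinh\rho\sim\rho$. Since $e^\rho\in[1,e]$ and $1+\rho\in[1,2]$ on this range, also $\frac{\rho}{1+\rho}e^\rho\sim\rho$. For $\rho\ge1$ I write $\sinh\rho=\tfrac12e^\rho(1-e^{-2\rho})$ with $1-e^{-2\rho}\in[1-e^{-2},1)$, so $\sinh\rho\sim e^\rho$. Since $\frac{\rho}{1+\rho}\in[\tfrac12,1)$ there, this matches the claimed form. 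The same argument shows that for every $x>0$
\[
\sinh x\sim \frac{x}{1+x}e^{x}.
\]

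\textbf{Second asymptotic.} The identity in \eqref{eq:hyperbolic function2} is the standard sum-to-product formula $\cosh a-\cosh b=2\sinh\frac{a-b}{2}\sinh\frac{a+b}{2}$. I apply the bound above with $x=\frac{\rho-r}{2}$ and with $x=\frac{\rho+r}{2}$. The factors $2$ only change constants, because $\frac{x/2}{1+x/2}\sim\frac{x}{1+x}$. This gives
\[
\cosh\rho-\cosh r\sim \frac{\rho-r}{1+\rho-r}\cdot\frac{\rho+r}{1+\rho+r}\,e^{(\rho-r)/2+(\rho+r)/2}.
\]
The exponent equals $\rho$. Because $\rho\le\rho+r\le2\rho$, I have $\frac{\rho+r}{1+\rho+r}\sim\frac{\rho}{1+\rho}$, which yields \eqref{eq:hyperbolic function2}.

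\textbf{The two cases of \eqref{eq:hyperbolic function3}.}
\begin{itemize}
\item If $r\le\rho\le r+1$: then $1+\rho-r\in[1,2]$, so the first factor is $\sim\rho-r$. Also $\rho\le r+1$ gives $e^\rho\sim e^r$ and $\frac{\rho}{1+\rho}\sim\frac{\rho+r}{1+r}$; both sides are comparable to $1$ when $r\ge1$ and to $\rho$ when $r\le1$. Multiplying out gives $\frac{(\rho-r)(\rho+r)}{1+r}e^r=\frac{\rho^2-r^2}{1+r}e^r$.
\item If $\rho\ge r+1$: then $\rho-r\ge1$ and $\rho\ge1$, so both rational factors lie in $[\tfrac12,1)$, and the expression is $\sim e^\rho$.
\end{itemize}

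The only point needing care is that all implicit constants are uniform in $r$ and $\rho$. This holds because each comparison above uses only the fixed thresholds $1$ and $2$.
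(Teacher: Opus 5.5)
Your proof is correct and follows the same route as the paper: the two-sided bound for $\sinh$ (you split at $\rho=1$, while the paper uses continuity plus the limits at $0$ and $\infty$), then the sum-to-product identity applied factor by factor, then the case split at $\rho=r+1$. In the first case of \eqref{eq:hyperbolic function3}, your comparison $\frac{\rho}{1+\rho}\sim\frac{\rho+r}{1+r}$, checked separately for $r\le 1$ and $r\ge 1$, is more careful than the paper's argument: the paper uses $\rho\sim r$ and $\rho+r\sim 2r$, which break down as $r\to 0$ with $\rho=\frac12$ fixed, even though the final estimate is true.
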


	\begin{proof}
		We prove each estimate in turn.
		
		\noindent\textbf{Proof of \eqref{eq:hyperbolic function1}.}
		For $\rho>0$, write $\sinh \rho=\frac{e^{\rho}-e^{-\rho}}{2}=\frac{e^{\rho}}{2}\,(1-e^{-2\rho})$.
		The function
		\[
		f(\rho):=\frac{\sinh\rho}{e^{\rho}}\frac{1+\rho}{\rho}
		=\frac{1-e^{-2\rho}}{2\rho\,/(1+\rho)}
		\]
		is continuous on $(0,\infty)$ and extends continuously to $[0,\infty]$ with positive limits. Indeed,
		\[
		\lim_{\rho\to0^+}f(\rho)
		=\lim_{\rho\to0^+}\frac{2\rho+O(\rho^{2})}{2\rho}(1+\rho)=1,\]
		\[\lim_{\rho\to\infty}f(\rho)
		=\lim_{\rho\to\infty}\frac{1-e^{-2\rho}}{2}\,\frac{1+\rho}{\rho}
		=\frac{1}{2}.
		\]
		Hence, $f$ is bounded between two positive constants on $(0,\infty)$, i.e.\ $\sinh\rho\sim\frac{\rho}{1+\rho}\,e^{\rho}$. (In fact, the precise bounds $\frac{\rho}{1+\rho}\le\frac{\sinh\rho}{e^{\rho}}\le\frac{\rho}{2+2\rho}$ for $\rho>0$ can be obtained by elementary calculus.)
		
		\noindent\textbf{Proof of \eqref{eq:hyperbolic function2}.}
		The identity $\cosh\rho-\cosh r=2\sinh\frac{\rho-r}{2}\sinh\frac{\rho+r}{2}$ follows directly from the sum‑to‑product formula for hyperbolic cosines.  Applying \eqref{eq:hyperbolic function1} to each factor,
		\begin{align*}
			\sinh\frac{\rho-r}{2}
			&\sim \frac{(\rho-r)/2}{1+(\rho-r)/2}\,e^{(\rho-r)/2}
			\;\sim\; \frac{\rho-r}{2+\rho-r}\,e^{(\rho-r)/2},\\[2mm]
			\sinh\frac{\rho+r}{2}
			&\sim \frac{(\rho+r)/2}{1+(\rho+r)/2}\,e^{(\rho+r)/2}
			\;\sim\; \frac{\rho+r}{2+\rho+r}\,e^{(\rho+r)/2}.
		\end{align*}
		Multiplying and keeping only the asymptotic equivalence yields
		\[
		\cosh\rho-\cosh r
		\;\sim\;
		\frac{\rho-r}{2+\rho-r}\,\frac{\rho+r}{2+\rho+r}\,e^{\rho}.
		\]
		Since $\rho>r$, we have $\rho+r\sim\rho$ and $2+\rho+r\sim 1+\rho$, while $2+\rho-r\sim 1+\rho-r$. Thus
		\[
		\cosh\rho-\cosh r \;\sim\; \frac{\rho-r}{1+\rho-r}\,\frac{\rho}{1+\rho}\,e^{\rho}.
		\]
		
		\noindent\textbf{Proof of \eqref{eq:hyperbolic function3}.}
		We divide into two cases.
		\begin{itemize}
			\item \textbf{Case $r\leqslant\rho\leqslant r+1$.} We see $\rho-r\leqslant1$, so $1+\rho-r\sim1$.  Moreover $\rho\sim r$ and $1+\rho\sim 1+r$.  From \eqref{eq:hyperbolic function2},
			\[
			\cosh\rho-\cosh r
			\;\sim\; (\rho-r)\,\frac{r}{1+r}\,e^{\rho}
			\;=\; \frac{r(\rho-r)}{1+r}\,e^{\rho}.
			\]
			Writing $\rho^{2}-r^{2}=(\rho-r)(\rho+r)$ and noting that $\rho+r\sim 2r$, we obtain
			\[
			\frac{r(\rho-r)}{1+r}\,e^{\rho}
			\;\sim\; \frac{\rho^{2}-r^{2}}{1+r}\,e^{r},
			\]
			since $e^{\rho}\sim e^{r}$ on this bounded interval.
			
			\item \textbf{Case $\rho\geqslant r+1$.} Then $\rho-r\geqslant1$, so $1+\rho-r\sim\rho-r$. From \eqref{eq:hyperbolic function2},
			\[
			\cosh\rho-\cosh r
			\;\sim\; 1\cdot\frac{\rho}{1+\rho}\,e^{\rho}
			\;\sim\; e^{\rho},
			\]
			since $\frac{\rho}{1+\rho}\to1$ as $\rho\to\infty$ and is bounded below by $\frac{1}{2}$ for all $\rho\geqslant1$.
		\end{itemize}
		Both cases together yield the piecewise estimate \eqref{eq:hyperbolic function3}.
	\end{proof}

	Assume $0 < B < \frac12$ and set
	\[\delta := \frac{1}{2}-B > 0.\]
	Set $K(\rho,B):=\Psi(\rho,B)+\Psi(\rho,-B)$ and recall
	\[
	\Psi(\rho,B)=\frac{1}{\sqrt{\cosh\rho-\cosh r}}\,
	\Biggl(\frac{\cosh\frac{\rho}{2}+\frac{1}{\sqrt{2}}\sqrt{\cosh\rho-\cosh r}}{\cosh\frac{r}{2}}\Biggr)^{-2B}.
	\]

	\textbf{Large‑time estimate:}
	Using~\eqref{eq:hyperbolic function2} we obtain
	\[
	|K(\rho,B)|\;\lesssim_B\; \sqrt{\frac{1+\rho-r}{\rho-r}}\,
	\sqrt{\frac{\rho+1}{\rho}}\; e^{(2B-1)\rho/2}\, e^{-Br}.
	\]
	Hence
	\begin{align*}
		|L_t(r)|
		&\lesssim_B \int_{r}^{\infty} \rho\,
		\sqrt{\frac{1+\rho-r}{\rho-r}}\,
		\sqrt{\frac{\rho+1}{\rho}}\,
		e^{(2B-1)\rho/2}\, e^{-Br}\,d\rho \\[2mm]
		&\lesssim_B e^{-Br}\int_{r}^{\infty} \rho\,
		\sqrt{\frac{1+\rho-r}{\rho-r}}\,
		e^{(2B-1)\rho/2}\,d\rho \\[2mm]
		&\lesssim e^{-Br}\int_{r}^{\infty} \rho\,
		e^{(2B-1)\rho/2}\,d\rho
		\;+\; e^{-Br}\int_{r}^{\infty} \frac{\rho}{\sqrt{\rho-r}}\,
		e^{(2B-1)\rho/2}\,d\rho.
	\end{align*}
	It is reduced to show that
	\begin{align}
		\label{eq:integral I1}
		I_1:=e^{-Br}\int_{r}^{\infty} \rho\,
		e^{(2B-1)\rho/2}\,d\rho &\lesssim_B (1+ r)\,e^{-r/2},\\
		\label{eq:integral I2}
		I_2:=e^{-Br}\int_{r}^{\infty} \frac{\rho}{\sqrt{\rho-r}}\,
		e^{(2B-1)\rho/2}\,d\rho &\lesssim_B (1+ r)\,e^{-r/2}.
	\end{align}

Observe that
\[
e^{-Br}\,e^{(2B-1)\rho/2} = e^{-\delta(\rho-r)}\,e^{-r/2}.
\]
Using the change of variables $u = \rho - r \ge 0$,
\begin{align*}
I_1 &:= e^{-Br}\int_{r}^{\infty} \rho\, e^{(2B-1)\rho/2}\,d\rho
      = e^{-r/2}\int_{0}^{\infty} (u+r)\, e^{-\delta u}\,du \\[2mm]
    &= e^{-r/2}\Bigl[\int_{0}^{\infty} u\, e^{-\delta u}\,du
                      + r\int_{0}^{\infty} e^{-\delta u}\,du\Bigr] \\[2mm]
    &= e^{-r/2}\Bigl[\frac{1}{\delta^{2}} + \frac{r}{\delta}\Bigr].
\end{align*}
Since $\delta=\frac{1}{2}-B$, we obtain
\[
I_1 \le \delta^{-2}\,(1+r)\,e^{-r/2}.
\]
Again substitute $u = \rho - r$:
\begin{align*}
I_2 &:= e^{-Br}\int_{r}^{\infty} \frac{\rho}{\sqrt{\rho-r}}\,
        e^{(2B-1)\rho/2}\,d\rho
      = e^{-r/2}\int_{0}^{\infty} \frac{u+r}{\sqrt{u}}\,
        e^{-\delta u}\,du \\[2mm]
    &= e^{-r/2}\Bigl[\int_{0}^{\infty} u^{1/2}\,e^{-\delta u}\,du
                      + r\int_{0}^{\infty} u^{-1/2}\,e^{-\delta u}\,du\Bigr].
\end{align*}
Both integrals are standard Gamma integrals:
\[
\int_{0}^{\infty} u^{1/2}\,e^{-\delta u}\,du
   = \frac{\Gamma\!\left(\frac32\right)}{\delta^{3/2}}
   = \frac{\sqrt{\pi}}{2\,\delta^{3/2}},
\qquad
\int_{0}^{\infty} u^{-1/2}\,e^{-\delta u}\,du
   = \frac{\Gamma\!\left(\frac12\right)}{\delta^{1/2}}
   = \frac{\sqrt{\pi}}{\delta^{1/2}}.
\]
Thus, we have
\[
I_2 = e^{-r/2}\Bigl[\frac{\sqrt{\pi}}{2\,\delta^{3/2}}
                     + r\,\frac{\sqrt{\pi}}{\delta^{1/2}}\Bigr]
     \le C_B\,(1+r)\,e^{-r/2},
\qquad C_B = \sqrt{\pi}\,\max\!\Bigl\{\frac{1}{\delta^{1/2}},\,\frac{1}{2\delta^{3/2}}\Bigr\}.
\]

Notice that $\delta = (1-2B)/2$, we see
\[
\delta^{-2} + C_B \sim \frac{1}{(1-2B)^{2}} \quad\text{as }\quad  B \to \tfrac12.
\]
	
	\textbf{Small‑time estimate:}
	To obtain \eqref{eq:pointwise estimate}, we firstly consider $0<t\leqslant 1+r$. We split the integral as follows
	\begin{align*}
		\int_{r}^{\infty}\rho e^{i\frac{\rho^{2}}{4t}}K(\rho,B)\,d\rho
		&= \underbrace{\int_{r}^{\sqrt{r^{2}+t}}\rho e^{i\frac{\rho^{2}}{4t}}K(\rho,B)\,d\rho}_{I_{0}}
		+ \underbrace{\int_{\sqrt{r^{2}+t}}^{\infty}\rho e^{i\frac{\rho^{2}}{4t}}K(\rho,B)\,d\rho}_{I_{\infty}} .
	\end{align*}
	
	For the first term $I_{0}$, we obtain from~\eqref{eq:hyperbolic function3}
	\begin{align*}
		|I_{0}|
		&\lesssim_B (1+r)^{1/2}e^{-r/2}\int_{r}^{\sqrt{r^{2}+t}}
		\frac{\rho}{\sqrt{\rho^{2}-r^{2}}}\,d\rho
		\;\sim_B\; \sqrt{t}\,(1+r)^{1/2}e^{-r/2}.
	\end{align*}
	
	For the second term $I_{\infty}$, in view of 
	\[
	\rho e^{i\rho^{2}/(4t)} = -2it\,\frac{d}{d\rho}\!\Bigl(e^{i\rho^{2}/(4t)}\Bigr),
	\]
	 we perform integration by parts to obtain
	\begin{align*}
		I_{\infty}
		&= \Bigl[-2it\,e^{i\frac{\rho^{2}}{4t}}K(\rho,B)\Bigr]_{\rho=\sqrt{r^{2}+t}}^{\rho\to\infty}
		+ 2it\int_{\sqrt{r^{2}+t}}^{\infty} e^{i\frac{\rho^{2}}{4t}}K'(\rho,B)\,d\rho \\[1mm]
		&= 2it\,e^{i\frac{r^{2}+t}{4t}}K\bigl(\sqrt{r^{2}+t},B\bigr)
		+ 2it\int_{\sqrt{r^{2}+t}}^{r+1} e^{i\frac{\rho^{2}}{4t}}K'(\rho,B)\,d\rho
		+ 2it\int_{r+1}^{\infty} e^{i\frac{\rho^{2}}{4t}}K'(\rho,B)\,d\rho \\[1mm]
		&=: 2it\,e^{i\frac{r^{2}+t}{4t}}K\bigl(\sqrt{r^{2}+t},B\bigr)
		+ 2it\,J_{1} + 2it\,J_{2},
	\end{align*}
    since the exponential decay of $K(\rho,B)$ results in the vanishing of the boundary term at infinity.
    
	The first term above is estimated exactly like $I_{0}$. Specifically, we have
	\[|tK(\sqrt{r^2+t},B)|\lesssim t(1+r)^{1/2}e^{-r/2}\frac{1}{\sqrt{(r^2+t)-r^2}}=\sqrt{t}\,(1+r)^{1/2}e^{-r/2}.\]
	To bound $J_{1}$ and $J_{2}$, we need the expression of $K'(\rho,B)$.  Introduce a new variable $X=X(\rho)\geqslant 0$ by
	\begin{equation}
		\label{eq:X variable}
		\cosh\frac{\rho}{2} = \cosh\frac{r}{2}\,\cosh X,
	\end{equation}
	we obtain
	\[
	K(\rho,B)=\frac{\sqrt{2}\,\cosh(2BX(\rho))}{\cosh\frac{r}{2}\,\sinh X(\rho)}
	\]
	and hence
	\begin{align*}
		K'(\rho,B)
		&= \frac{\sinh\frac{\rho}{2}}{\sqrt{2}\cosh^{2}\frac{r}{2}}\,
		\frac{2B\sinh(2B X)}{\sinh^{2}X}
		- \frac{\sinh\rho}{4\cosh^{2}\frac{r}{2}}\,
		\frac{K(\rho,B)}{\sinh^{2}X}.
	\end{align*}
	
	Since $0<B<\frac{1}{2}$, we see $\sinh(2BX)<\sinh X$ for $X>0$ and thus
	\begin{align*}
		|K'(\rho,B)|
		&\lesssim_B \frac{\sinh\frac{\rho}{2}}{\cosh^{2}\frac{r}{2}}\,
		\frac{1}{\sinh X}
		+ \frac{\sinh\rho}{\cosh^{2}\frac{r}{2}}\,
		\frac{K(\rho,B)}{\sinh^{2}X}.
	\end{align*}
	A direct computation yields
	\begin{align*}
		\sinh X(\rho) &= \sqrt{\cosh\rho-\cosh r}/\sqrt{2}\cosh\frac{r}{2}.
	\end{align*}
	
	Thus, for $\sqrt{r^{2}+t}<\rho<r+1$, it follows by ~\eqref{eq:hyperbolic function3} that
	\begin{align*}
		|K'(\rho,B)|
		&\lesssim_B \frac{\sinh\frac{\rho}{2}}{\cosh^{2}\frac{r}{2}}\,
		\frac{1}{\sinh X}
		+ \frac{\sinh\rho}{\cosh^{2}\frac{r}{2}}\,
		\frac{K(\rho,B)}{\sinh^{2}X} \\[1mm]
		&\lesssim_B e^{-r/2}\Bigl[\Bigl(\frac{1+r}{\rho^{2}-r^{2}}\Bigr)^{1/2}
		+ \Bigl(\frac{1+r}{\rho^{2}-r^{2}}\Bigr)^{3/2}\Bigr] \\[1mm]
		&\lesssim_B e^{-r/2}\Bigl(\frac{1+r}{\rho^{2}-r^{2}}\Bigr)^{3/2}.
	\end{align*}

	Consequently,
	\[
	|J_{1}|
	\lesssim_B e^{-r/2}\int_{\sqrt{r^{2}+t}}^{r+1}
	\Bigl(\frac{1+r}{\rho^{2}-r^{2}}\Bigr)^{3/2}\,d\rho
	\lesssim_B t^{-1/2}(1+r)^{1/2}e^{-r/2}.
	\]
	
	For the term \(J_2\), we see that $\rho>r+1$ implies $\cosh\rho-\cosh r\sim e^{\rho}$.  A similar computation yields
	\[
	|K'(\rho,B)| \lesssim_B e^{-\rho/2}e^{B(\rho-r)},
	\]
	and therefore
	\[
	|J_{2}|
	\lesssim_B \int_{r+1}^{\infty} e^{-\rho/2}e^{B(\rho-r)}\,d\rho
	\lesssim_B e^{-r/2}.
	\]
	
	Collecting the above estimates, we conclude that
	\[
	2t\,(|J_{1}|+|J_{2}|)
	\lesssim_B \sqrt{t}\,(1+r)^{1/2}e^{-r/2},
	\]
	which completes the proof of Proposition~\ref{pro:upper bound}.
	
	\subsection{Proof of Theorem~\ref{cor:decay} and Corollary~\ref{cor:Strichartz}}
	To prove Theorem~\ref{cor:decay}, we follow the argument of Anker-Pierfelice~\cite{anker2009} to obtain a slightly stronger result.
	
	\begin{corollary}[Dispersive estimate]
		\label{cor:decay stronger}
		Let $0<B<\frac{1}{2}$ and \(2<q,\tilde{q}\leqslant \infty\). The solution \(u(t)\) to the Schr\"{o}dinger equation \eqref{eq:prob} with initial datum \(u(0)=f\) satisfies the dispersive estimate
		\begin{equation}
			\label{eq:stronger dispersive}
			\|u(t)\|_{L^q}\le C_B\|f\|_{L^{\tilde{q}'}}\times\begin{cases}
				|t|^{-2\max\{\frac{1}{2}-\frac{1}{q},\frac{1}{2}-\frac{1}{\tilde{q}}\}},&\text{if }\quad 0<|t|<1,\\[4pt]
				|t|^{-\frac{3}{2}},&\text{if }\quad |t|\geqslant 1,
			\end{cases}
		\end{equation}
		where the constant \(C_B\) is independent of \(t\) and \(f\).
	\end{corollary}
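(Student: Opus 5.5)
We follow the Anker--Pierfelice scheme: the bound $|e^{-itH_B}f|\lesssim |t|^{-3/2}\,|f|*|L_t|$ reduces everything to estimating the radial kernel $k_t(r):=|t|^{-3/2}|L_t(r)|$ in Lorentz norms. We then combine the Kunze--Stein inequality \eqref{eq:the Kunze–Stein phenomenon2} with the pointwise bound of Proposition~\ref{pro:upper bound}. By symmetry of the estimate under $(q,\tilde q)\mapsto(\tilde q,q)$ (duality, since $e^{-itH_B}$ is unitary and the adjoint kernel has the same modulus), we may assume $q\ge\tilde q$.

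\textbf{Step 1 ($\tilde q=q$).} For $2<q\le\infty$ the Kunze--Stein inequality gives
\[
\|u(t)\|_{L^q}\lesssim_q \|f\|_{L^{q'}}\,\|k_t\|_{L^{q,1}_{\rm rad}} .
\]
The Lorentz norm of a radial function on $\mathbb{H}$ is controlled by splitting into the ball $r\le 1$ and its complement; the volume element there is $\sinh r\,dr$. For $q=\infty$ we instead use the trivial $L^1\to L^\infty$ bound with $\sup_r k_t$.

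\textbf{Step 2 (large times, $|t|\ge1$).} By Proposition~\ref{pro:upper bound}, $k_t(r)\lesssim |t|^{-3/2}(1+r)e^{-r/2}$. Since $\int_0^\infty \big((1+r)e^{-r/2}\big)^q\sinh r\,dr<\infty$ for every $q>2$, and the $L^{q,1}$ norm is comparable to a weighted dyadic sum over annuli that still converges, we get $\|k_t\|_{L^{q,1}}\lesssim |t|^{-3/2}$. This handles all $q>2$.

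\textbf{Step 3 (small times, $0<|t|<1$).} Here we use $k_t(r)\lesssim |t|^{-1}(1+r)^{1/2}e^{-r/2}$, which comes from the $\sqrt{|t|}$ branch of the minimum; at $q=\infty$ this already gives the $|t|^{-1}$ rate. To get $|t|^{-(1-2/q)}$ at $\tilde q=q$, we split the kernel into the part with $r\le 1$ and the part with $r>1$. For $r>1$ the kernel is bounded by $|t|^{-1}\cdot$ (an $L^{q,1}$ function), which is acceptable only with an extra interpolation. Cleaner is the Anker--Pierfelice decomposition: write $e^{-itH_B}=\chi(H_B)$-localized pieces, or interpolate the $L^1\to L^\infty$ bound $|t|^{-1}$ (from $\sup k_t$) with $L^2$ conservation via Riesz--Thorin for the local part $r\le1$. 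The tail $r\ge1$ is treated by Kunze--Stein, using that its $L^{q,1}$ norm is at most $C|t|^{-1}$ times an $r$-decaying factor. We then improve this to $|t|^{-(1-2/q)}$ by interpolating the tail operator's $L^{2}\to L^2$ bound (obtained via Kunze--Stein at $q=2^+$ type estimates, or the $|t|^{-3/2}$ decay structure) against its $L^{q'}\to L^q$ bound.

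\textbf{Step 4 (mixed exponents $q\ne\tilde q$).} We use the Kunze--Stein $L^{\tilde q'}\to L^q$ version for radial kernels in $L^{Q,1}$ with $Q=\max\{q,\tilde q\}$ together with the local/global splitting. The local part is handled by Young's inequality on the ball, giving the rate $|t|^{-2\max(1/2-1/q,\,1/2-1/\tilde q)}$ by interpolation along the segment between $(1/\tilde q',1/q)$ and the diagonal. The global part is handled by Kunze--Stein, since exponential decay $e^{-r/2}$ beats the $L^{q,1}$ threshold for any $q>2$.

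\textbf{Main obstacle.} The delicate step is the small-time local part. There, Proposition~\ref{pro:upper bound} only gives $|t|^{-1}$ pointwise, so the exact power $|t|^{-2(1/2-1/q)}$ must come from interpolation with $L^2$ unitarity after isolating the tail $r\ge1$, which is harmless uniformly in $t$. We expect to copy the Anker--Pierfelice argument verbatim at this point.
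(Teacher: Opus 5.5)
Your large-time argument (Steps 1--2 and the large-time half of Step 4) is essentially the paper's. The pointwise bound of Proposition~\ref{pro:upper bound} puts $|t|^{-3/2}|L_t|$ in every $L^{Q,1}$, $Q>2$, with norm $O(|t|^{-3/2})$. Kunze--Stein, Minkowski and H\"older bounds plus interpolation then give $|t|^{-3/2}$. There is a small slip: with one fixed $Q$, the triangle spanned by the $L^{Q'}\to L^Q$, $L^1\to L^Q$, $L^{Q'}\to L^\infty$ bounds contains $(1/\tilde q',1/q)$ only if $Q\le\min\{q,\tilde q\}$, not for $Q=\max\{q,\tilde q\}$. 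This is harmless, because the kernel lies in every $L^{Q,1}$.

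The small-time part, however, has a genuine gap.
\begin{itemize}
\item $L^2$ conservation belongs to $e^{-itH_B}$ itself. The operators with kernel truncated to $r\le1$ or $r\ge1$ are not unitary, and the pointwise bound gives no $t$-uniform $L^2\to L^2$ bound for them.
\item For the local piece, Young's inequality only gives $O(|t|^{-1})$ on $L^2$. Interpolating that with $L^1\to L^\infty$ gains nothing.
\item The tail is not ``harmless uniformly in $t$''. Since $|t|^{-3/2}|L_t(r)|\lesssim|t|^{-1}(1+r)^{1/2}e^{-r/2}$, its $L^{q,1}$ norm is of size $|t|^{-1}$, worse than the target $|t|^{-(1-2/q)}$.
\item There is no ``Kunze--Stein at $q=2^+$'' to rescue the tail. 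At $L^2$, Herz's principle bounds convolution by a positive radial kernel $k$ through $\int k\,\varphi_0\,d\omega$, with $\varphi_0(r)\asymp(1+r)e^{-r/2}$. For this tail that integral diverges like $\int_1^\infty(1+r)^{3/2}\,dr$.
\end{itemize}
So Step 3 does not close. Neither does the small-time half of Step 4, where the global part again only yields $|t|^{-1}$.

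The missing observation is that no splitting is needed: interpolate the whole propagator, as the paper does. For $|t|\le1$ the pointwise bound gives $|L_t(r)|\lesssim\sqrt{|t|}(1+r)^{1/2}e^{-r/2}$. Hence $\|L_t\|_{L^Q}\lesssim|t|^{1/2}$ for every $Q\in(2,\infty]$, so both $\|e^{-itH_B}\|_{L^1\to L^Q}$ and $\|e^{-itH_B}\|_{L^{Q'}\to L^\infty}$ are $\lesssim|t|^{-1}$. Combine these with $\|e^{-itH_B}\|_{L^2\to L^2}=1$.

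For $q\ge\tilde q$, write $(\tfrac{1}{\tilde q'},\tfrac1q)=\tfrac2q(\tfrac12,\tfrac12)+(1-\tfrac2q)(a,0)$ with $a=\frac{1-1/q-1/\tilde q}{1-2/q}\in(\tfrac12,1]$. Then $L^{1/a}\to L^\infty$ is an $L^{Q'}\to L^\infty$ bound with $Q\in(2,\infty]$. Riesz--Thorin between it and $L^2\to L^2$ gives exactly $|t|^{-(1-2/q)}=|t|^{-2\max\{\frac12-\frac1q,\frac12-\frac1{\tilde q}\}}$. For $q=\tilde q$ this is just $L^1\to L^\infty$ against $L^2\to L^2$, and the case $q<\tilde q$ is symmetric using the $L^1\to L^Q$ bounds.

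The ``main obstacle'' you identify is therefore not an obstacle, and the workaround you propose in its place is not valid.
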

	
	\begin{proof}
		The first step is to establish the following weak estimate for $2 < q < \infty$ and $1 \leqslant \alpha \leqslant \infty$:
		\begin{equation}
			\label{eq:Lorentz norm estimate}
			\| L_t \|_{L^{q,\alpha}} \leqslant C_B \begin{cases} 
				|t|^{1/2}, & \text{if } 0 < |t| \leqslant 1, \\
				1, & \text{if } |t| \geqslant 1.
			\end{cases}
		\end{equation}
		
		Indeed, if $f$ is a positive radial decreasing function on $\mathbb{H}$, then $f^* = f \circ V^{-1}$, where
		\[
		V(r) = C \int_0^r \sinh s\,ds \sim \begin{cases} 
			r^2 & \text{as } r \to 0, \\
			e^{r} & \text{as } r \to +\infty
		\end{cases}
		\]
		is the volume of a ball of radius $r > 0$ in $\mathbb{H}$. Hence, we have, for $1 \leqslant \alpha < \infty$, 
		\begin{align*}
			\| f \|_{L^{q,\alpha}}
			&= \Bigl[ \int_0^{+\infty} \bigl( V(r)^{1/q} f(r) \bigr)^\alpha \frac{V'(r)}{V(r)}\,dr \Bigr]^{1/\alpha}\\
			&\asymp \Bigl[ \int_0^1 f(r)^\alpha r^{\frac{2\alpha}{q} - 1}\,dr \Bigr]^{1/\alpha}
			+ \Bigl[ \int_1^{+\infty} f(r)^\alpha e^{\frac{\alpha}{q} r}\,dr \Bigr]^{1/\alpha},
		\end{align*}
		and for $\alpha = \infty$,
		\[
		\| f \|_{L^{q,\infty}} = \sup_{r > 0} V(r)^{1/q} f(r)
		\sim \sup_{0 < r < 1} r^{\frac{2}{q}} f(r) + \sup_{r \geqslant 1} e^{\frac{1}{q} r} f(r).
		\]
		The Lorentz norm estimate \eqref{eq:Lorentz norm estimate} now follows from these equivalences and the pointwise estimate \eqref{eq:pointwise estimate}.
		
		Next, the Young's inequality and the Kunze–Stein phenomenon \eqref{eq:the Kunze–Stein phenomenon2} come into play. The estimates \eqref{eq:stronger dispersive} are obtained by interpolation, in view of the Lorentz norm estimate \eqref{eq:Lorentz norm estimate}. More precisely, the small‑time bounds follow from
		\[
		\begin{cases}
			\big\| e^{-it H_B} \big\|_{L^1 \to L^q} = |t|^{-3/2} \| L_t \|_{L^q} \leqslant C_q |t|^{-1}, & \forall q > 2, \\[4pt]
			\big\| e^{-it H_B} \big\|_{L^{q'} \to L^\infty} = |t|^{-3/2} \| L_t \|_{L^q} \leqslant C_q |t|^{-1}, & \forall q > 2, \\[4pt]
			\big\| e^{-it H_B} \big\|_{L^2 \to L^2} = 1,
		\end{cases}
		\]
		and the large‑time bounds follow from
		\[
		\begin{cases}
			\big\| e^{-it H_B} \big\|_{L^1 \to L^q} = |t|^{-3/2} \| L_t \|_{L^q} \leqslant C_q |t|^{-\frac{3}{2}}, & \forall q > 2, \\[4pt]
			\big\| e^{-it H_B} \big\|_{L^{q'} \to L^\infty} = |t|^{-3/2} \| L_t \|_{L^q} \leqslant C_q |t|^{-\frac{3}{2}}, & \forall q > 2, \\[4pt]
			\big\| e^{-it H_B} \big\|_{L^{q'} \to L^q} \leqslant C_q |t|^{-3/2} \| L_t \|_{L^{q,1}} \leqslant C_q |t|^{-\frac{3}{2}}, & \forall q > 2.
		\end{cases}
		\]
	\end{proof}
	
	The Strichartz estimate in Corollary~\ref{cor:Strichartz} follows by the standard \(TT^*\) argument; we omit the details.

	\newpage

\begin{thebibliography}{Com86}
		
		\bibitem{anker2009}
		Jean-Philippe Anker and Vittoria Pierfelice.
		\newblock Nonlinear {Schr{\"o}dinger} equation on real hyperbolic spaces.
		\newblock {\em Ann. Inst. Henri Poincar{\'e}, Anal. Non Lin{\'e}aire}, 26(5):1853--1869, 2009.
		
		\bibitem{Banica2007}
		V.~Banica.
		\newblock The nonlinear {Schr{\"o}dinger} equation on hyperbolic space.
		\newblock {\em Commun. Partial Differ. Equations}, 32(10):1643--1677, 2007.
		
		\bibitem{CHMM98} A. L. Carey, K. C. Hannabuss, V. Mathai and P. McCann, Quantum Hall effect on the hyperbolic plane, Comm. Math. Phys. 190 (1998), 629-673.
		
		\bibitem{Comtet1987}
		Alain Comtet.
		\newblock On the {Landau} levels on the hyperbolic plane.
		\newblock {\em Ann. Phys.}, 173:185--209, 1986.
		
		\bibitem{cowling1997}
		Michael Cowling.
		\newblock Herz's ``{Principe} de majoration'' and the {Kunze}-{Stein} phenomenon.
		\newblock In {\em Harmonic analysis and number theory. Papers in honour of Carl S. Herz. Proceedings of the conference, April 15--19, 1996, Montr\'eal, Canada}, pages 73--88. Providence, RI: American Mathematical Society, 1997.

        \bibitem{ESV02}
        P. Exner, P\v{S}t'ov\'{\i}\v{c}ek and P. Vyt\v{r}as.
        \newblock Generalised boundary conditions for the {Aharonov-Bohm} effect combined with a homogeneous magnetic field.
        \newblock {\em J. Math. Phys.}, 43 (2002), no. 5, 2151-2168.
        
		
		\bibitem{Fay77}
		 J. D. Fay, Fourier coefficients of the resolvent for a Fuchsian group, J. Reine Angew. Math. 293-294 (1977), 143-203.

		\bibitem{ionescu2000}
		Alexandru~D. Ionescu.
		\newblock An endpoint estimate for the {Kunze}-{Stein} phenomenon and related maximal operators.
		\newblock {\em Ann. Math. (2)}, 152(1):259--275, 2000.
		
		\bibitem{keel-tao}
		Markus Keel and Terence Tao.
		\newblock Endpoint {Strichartz} estimates.
		\newblock {\em Am. J. Math.}, 120(5):955--980, 1998.
		
		\bibitem{Lan30} L. Landau, Diamagnetismus der Metalle, Z. Physik 64(1930), 629-637.
		
		\bibitem{Osh90} K. Oshima, Completeness relations for Maass Laplacians and heat kernels on the super Poincar\'{e} upper half-plane, J. Math. Phys. 31 (1990), 3060-3063.

		\bibitem{patterson1975}
		S.~J. Patterson.
		\newblock The {Laplacian} operator on a {Riemann} surface.
		\newblock {\em Compos. Math.}, 31:83--107, 1975.
		
		\bibitem{Pol81} A. M. Polyakov, Quantum geometry of bosonic strings, Phys. Lett. B 103 (1981), no. 3, 207-210.

        \bibitem{Sto17}
        P. \v{S}t'ov\'{\i}\v{c}ek.
        \newblock The heat kernel for two {Aharonov-Bohm} solenoids in a uniform magnetic field.
        \newblock {\em Ann. of Phys.}, 376 (2017), 254-282.
        
		\bibitem{Shubin2001}
		Mikhail Shubin.
		\newblock Essential self-adjointness for semi-bounded magnetic {Schr{\"o}dinger} operators on non-compact manifolds.
		\newblock {\em J. Funct. Anal.}, 186(1):92--116, 2001.
		
		\bibitem{Xia88} J. Xia, Geometric invariants of the quantum hall effect, Comm. Math. Phys. 119 (1988), 29-50.
	\end{thebibliography}

\end{document}